\documentclass[12pt]{article}
\usepackage{amsmath,amsfonts,amssymb,amsthm}
\usepackage{eucal}

\newtheorem{Theorem}{Theorem}

\newtheorem{Lemma}[Theorem]{Lemma}
\newtheorem{Corollary}[Theorem]{Corollary}
\textwidth=150mm
\textheight=230mm
\topmargin=-5mm
\oddsidemargin=5mm
\evensidemargin=5mm
\def\P{{\mathbb P}}
\def\E{{\mathbb E}}
\def\R{{\mathbb R}}

\def\I{{\mathbb I}}

\begin{document}

\section*{\centering\large\bf
           On subexponential tails for the maxima\\
           of negatively driven compound renewal
           and L\'evy processes}

\section*{\centering\normalsize\slshape\bfseries
Dmitry Korshunov\footnote{Lancaster University, UK.
E-mail: d.korshunov@lancaster.ac.uk}}

\begin{abstract}\noindent
We study subexponential tail asymptotics for the distribution
of the maximum $M_t:=\sup_{u\in[0,t]}X_u$ of
a process $X_t$ with negative drift for the entire range of $t>0$.
We consider compound renewal processes with linear drift and L\'evy processes.
For both we also formulate and prove the principle 
of a single big jump for their maxima.
The class of compound renewal processes with drift 
particularly includes Cram\'er--Lundberg renewal risk process.
\\[2mm]
{\it Keywords}: L\'evy process,
compound renewal process,
distribution tails,
heavy tails,
long-tailed distributions,
subexponential distributions,
random walk.

{\it AMS 2010 subject classification}: 60F10, 60G51, 60K05
\end{abstract}

\section{Introduction}\label{Introduction}

For a probability distribution $F$ on the real line,
let $F(x)=F(-\infty,x]$ denote the
distribution function and $\overline F(x)=F(x,\infty)=1-F(x)$ its tail. 
We say that $F$ is (right-) heavy-tailed distribution if
all its positive exponential moments are infinite, 
$\int_{\mathbb R} e^{sx}F(dx)=\infty$ for all $s>0$.
Otherwise we call $F$ (right-) light-tailed.

In the presence of heavy tails, the class ${\mathcal S}$
of subexponential distributions is of basic importance.
A distribution $F$ on ${\mathbb R}^+$ with unbounded support
is called {\it subexponential} if
$\overline{F*F}(x)\sim 2\overline F(x)$ as $x\to\infty$.
A distribution $F$ of $\xi$ on the whole real line is called subexponential
if the distribution $F^+$ of $\xi^+$ is so.

Any subexponential distribution is known 
(see, e.g., Foss et al. (2013, Lemma 3.2)) to be {\it long-tailed}, 
i.e., for any fixed $y$, $\overline F(x+y)\sim \overline F(x)$ as $x\to\infty$.

The class of subexponential distributions plays an important role 
in many applications, for instance, for waiting times in the $GI/G/1$ queue
and for ruin probabilities---see, e.g.,
Asmussen (2003, Ch. X.9); Asmussen and Albrecher (2010, Ch.\ X);
Embrechts et al. (1997, Sec. 1.4); 
Rolski et al. (1998).

A distribution $F$ on ${\mathbb R}$
with right unbounded support and finite mean is called
{\it strong subexponential} ($F\in{\mathcal S}^*$) if
\begin{eqnarray*}
\int\limits_0^x \overline F(x-y)\overline F(y)dy &\sim&
2\overline F(x)\int\limits_0^\infty\overline F(y)dy
\quad\mbox{as }x\to\infty.
\end{eqnarray*}
It is known---see, e.g., Foss et al. (2013, Theorem 3.27)---that 
$F\in{\mathcal S}^*$ implies 
both $F\in{\mathcal S}$ and $F_I\in{\mathcal S}$ where $F_I$ is 
the {\it integrated tail distribution} defined by its tail,
\begin{eqnarray*}
\overline F_I(x) &:=& 
\min\Bigl(1,\int_x^\infty\overline F(y)dy\Bigr), \quad x>0.
\end{eqnarray*}

Let $Y$, $Y_1$, $Y_2$, \ldots\ be independent identically distributed
random variables with a negative expectation $b=\E Y<0$.
Consider a random walk $S_0=0$, $S_n=Y_1+\ldots+Y_n$ and its maximum
\begin{eqnarray*}
M^S_n &:=& \max_{0\le k\le n}\sum_{i=1}^k Y_i,
\end{eqnarray*}
hereinafter we follow the standard convention $\sum_{i=1}^0 f(i)=0$.

Since $b<0$, the family $M_n^S$, $n\ge 1$, is stochastically bounded.
Let $B$ be the distribution of $Y_1^+$ and $B_I$
be the integrated tail distribution of $Y_1^+$.
As well known for the overall maximum of the random walk,
\begin{eqnarray*}
M^S_\infty &=& \max_{n\ge 0}\sum_{i=1}^nY_i,
\end{eqnarray*}
the asymptotic relation
\begin{eqnarray}\label{max.global}
\P \{M^S_\infty>x\} &\sim& \overline B_I(x)/|b|
\quad\mbox{ as }x\to\infty
\end{eqnarray}
holds in the heavy-tailed case if and only if the integrated tail distribution $B_I$
is subexponential---see e.g. Theorem 5.12 in Foss et al. (2013). 
Also, if $B$ is strong subexponential, $B\in{\mathcal S}^*$, 
then the following tail result holds for finite time horizon maxima
\begin{eqnarray}\label{max.t}
\P\Bigl\{\max_{k\le n}\sum_{i=1}^k Y_i>x\Bigr\}
&\sim& \frac{1}{|b|}\int_x^{x+n|b|}\overline B(v)dv
\end{eqnarray}
as $x\to\infty$ uniformly for all $n\ge 1$---see Korshunov (2002)
or Foss et al. (2013, Theorem 5.3); uniformity for all $n\ge 1$ means that
\begin{eqnarray*}
\sup_{n\ge 1}\Biggl|\frac{\P\bigl\{\max_{k\le n}\sum_{i=1}^k Y_i>x\bigr\}}
{\frac{1}{|b|}\int_x^{x+n|b|}\overline B(v)dv}-1\Biggr|
&\to& 0\quad\mbox { as }x\to\infty.
\end{eqnarray*}

So the subexponential tail behaviour for the maxima of random walks 
is well understood while surprisingly much less is known for L\'evy processes.
In this contribution we particularly demonstrate in Section \ref{renewal} how 
results for random walks relate to those for the compound renewal process with 
linear drift in the presence of heavy-tails---see Theorem \ref{thm:renewal.linear};
in particular, we formulate and prove the principle of a single big jump
in Theorem \ref{single.big.jump.renewal}. 
Based on that we give in Section \ref{sec:Levy} a very general treatment 
of subexponential tail behaviour for L\'evy processes 
with negative drift---see Theorem \ref{thm:Levy.gen}. 
In Section \ref{Levy.random.time} we derive tail asymptotics for 
a L\'evy process stopped at random time and for its maximum 
within this time interval. An application to the Cram\'er--Lundberg
renewal risk model is given in Section \ref{sec:risk}. 
A discussion of results available in the literature may be found 
just after Theorems \ref{thm:renewal.linear} and \ref{thm:Levy.gen}.

\section{Asymptotics for compound renewal process}
\label{renewal}

Consider a {\it compound renewal process} $X_t$
which is defined as
\begin{eqnarray*}
X_t &=& \sum_{i=1}^{N_t}Y_i,
\end{eqnarray*}
where $N_t$ is a renewal process generated by jump epochs
$0=T_0<T_1<T_2<\ldots$, where $\tau_n:=T_n-T_{n-1}>0$
are independent identically distributed random
variables with finite mean $\E\tau=:1/\lambda$, and $Y_n$, $n\ge1$, 
are independent identically distributed jumps with finite mean $b$.
The $Y$'s are supposed to be independent of the process $N_t$.
Assume that the drift of the process is negative,
that is, $b<0$, so we have that the family of distributions of maxima
$$
M_t := \max_{u\in[0,t]}X_u
$$
is tight, 
$$
\sup_{t>0}\,\P\{M_t>x\} \le \P\{M_\infty>x\} \to 0 \quad\mbox{as }x\to\infty.
$$
We are interested in the tail behaviour of $M_t$. The overall maximum 
$M_\infty$ is simply the maximum of the associated random walk:
\begin{eqnarray*}
M_\infty &=& M^S_\infty\ =\ \max_{n\ge 0}\sum_{i=1}^nY_i,
\end{eqnarray*}
due to piecewise constant behaviour of the process $X_t$. 
Let $B$ be the distribution of $Y_1^+$ and $B_I$
be the integrated tail distribution of $Y_1^+$.
Then it follows from the result for the overall maximum 
of the associated random walk that
\begin{eqnarray}\label{max.global}
\P \{M_\infty>x\} &\sim& \overline B_I(x)/|b|
\quad\mbox{ as }x\to\infty
\end{eqnarray}
holds in the heavy-tailed case if and only if the integrated tail 
distribution $B_I$ is subexponential. 

The finite time horizon tail asymptotics for $M_t$ 
are slightly more complicated than that for the infinite time horizon
and are described in the following theorem.

\begin{Theorem}\label{thm:renewal}
Let $X_t$ be a compound renewal process with negative drift
$b/\E \tau<0$.
If the distribution $B$ of $Y_1^+$ is strong subexponential,
then, uniformly for all $t>0$,
\begin{eqnarray*}
\P \{M_t>x\} &\sim& \frac{1}{|b|}
\int_x^{x+|b|\E N_t}\overline B(v)dv
\quad\mbox{as }x\to\infty.
\end{eqnarray*}
In particular,
\begin{eqnarray*}
\P \{M_t>x\} &\sim& \frac{1}{|b|}
\int_x^{x+|b|\lambda t}\overline B(v)dv
\quad\mbox{as }x,\ t\to\infty.
\end{eqnarray*}
\end{Theorem}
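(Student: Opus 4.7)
The plan is to reduce the theorem to the random walk asymptotic (\ref{max.t}) by conditioning on the number of jumps. Since $X_t$ is piecewise constant on $[T_{n-1},T_n)$, its running maximum coincides with that of the embedded random walk stopped at time $N_t$, so
\[
M_t\ =\ M^S_{N_t}\ =\ \max_{0\le k\le N_t}\sum_{i=1}^k Y_i.
\]
Because $N_t$ is independent of $(Y_i)$,
\[
\P\{M_t>x\}\ =\ \sum_{n\ge 0}\P\{N_t=n\}\,\P\{M^S_n>x\}.
\]
The uniform random walk result (\ref{max.t}), which is available since $B\in{\mathcal S}^*$, permits me to replace $\P\{M^S_n>x\}$ by $|b|^{-1}\varphi_x(n)$ with a relative error that is $o(1)$ as $x\to\infty$ uniformly in $n\ge 1$, where
\[
\varphi_x(n)\ :=\ \int_x^{x+n|b|}\overline B(v)\,dv.
\]
The uniformity of the error in $n$ ensures that one may pass this asymptotic under $\E$, so the whole theorem reduces to showing
$\E\varphi_x(N_t)\sim \varphi_x(\E N_t)$ as $x\to\infty$, uniformly in $t>0$.

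Since $\overline B$ is nonincreasing, $\varphi_x$ is concave on $[0,\infty)$, and Jensen's inequality immediately gives the upper bound $\E\varphi_x(N_t)\le\varphi_x(\E N_t)$. For the matching lower bound I would split according to the magnitude of $\E N_t$. When $\E N_t$ stays bounded (in particular for bounded $t$), the family $\{N_t\}$ is uniformly integrable; using the long-tailedness of $B$ (which follows from $B\in{\mathcal S}^*$) one has $\varphi_x(n)\sim n|b|\overline B(x)$ uniformly in bounded $n$, so both sides are equivalent to $|b|\,\E N_t\,\overline B(x)$. When $\E N_t\to\infty$, a quantitative renewal concentration bound yields $\P\{N_t\ge(1-\varepsilon)\E N_t\}\to 1$, whence $\E\varphi_x(N_t)\ge(1-o(1))\,\varphi_x((1-\varepsilon)\E N_t)$, and a direct estimate using the monotonicity of $\overline B$ and the concavity of $\varphi_x$ shows $\varphi_x((1-\varepsilon)\E N_t)/\varphi_x(\E N_t)\ge 1-O(\varepsilon)$ uniformly in $x$. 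Sending $\varepsilon\to 0$ closes the lower bound on each of the two regimes.

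The main technical obstacle will be gluing these two regimes into a single estimate valid uniformly over $t>0$. Concretely, I need a concentration bound for $\E N_t-N_t$ that is strong enough to absorb the worst case in which $\overline B$ decays substantially across $[x,x+|b|\E N_t]$ (so that $\varphi_x$ does drop appreciably between $\varphi_x((1-\varepsilon)\E N_t)$ and $\varphi_x(\E N_t)$), while simultaneously controlling the left-deviation tail $\P\{N_t\le(1-\varepsilon)\E N_t\}$ at those moderate values of $\E N_t$ where neither the LLN nor the bounded-$t$ argument is automatic. Once this interpolation is in place, the second, asymptotic, statement of the theorem follows from the elementary renewal theorem $\E N_t\sim\lambda t$ and the long-tailedness of $B$ on intervals of length of order $\lambda t$.
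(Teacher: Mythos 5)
Your approach is valid and genuinely different from the paper's. The paper derives Theorem~\ref{thm:renewal} as the $c=0$ case of Theorem~\ref{thm:renewal.linear}, whose proof handles bounded $t$ by sandwiching $\P\{M_t>x\}$ between tails of randomly stopped sums (via Kesten's bound and the random-sum asymptotic), and handles the remaining range $t>h(x)$ by a separate lower bound (strong law for $T_n$ plus \eqref{max.t}) and upper bound (event split on $N_t\le(1+\varepsilon)\E N_t$ together with an exponential Chebyshev bound for the renewal count). You instead condition on $N_t$ at the outset, invoke \eqref{max.t} uniformly in $n\ge1$ to pass to $\P\{M_t>x\}\sim|b|^{-1}\E\varphi_x(N_t)$ uniformly in $t$, and then cast the entire theorem as the renewal lemma $\E\varphi_x(N_t)\sim\varphi_x(\E N_t)$; the asymptotic upper bound then falls out of Jensen and concavity of $\varphi_x$. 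This is a cleaner decomposition and isolates precisely where the renewal structure enters.

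The ``gluing'' you flag as the main obstacle is in fact unproblematic, and you do not need the refined interpolating concentration bound you worry about. Fix $\delta>0$, take $\varepsilon$ small, and choose $t_1$ so that $\P\{N_t\ge(1-\varepsilon)\E N_t\}\ge1-\varepsilon$ for all $t>t_1$; this follows from the strong law for $T_n$ since $\P\{N_t<(1-\varepsilon)\E N_t\}=\P\{T_{\lceil(1-\varepsilon)\E N_t\rceil}>t\}\to0$. For $t>t_1$ the concavity inequality $\varphi_x((1-\varepsilon)m)\ge(1-\varepsilon)\varphi_x(m)$ (valid for all $x$ and $m$ because $\varphi_x$ is concave with $\varphi_x(0)=0$) already yields $\E\varphi_x(N_t)\ge(1-\varepsilon)^2\varphi_x(\E N_t)$ uniformly in $x$, regardless of how much $\overline B$ drops across $[x,x+|b|\E N_t]$. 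For $t\le t_1$ you need neither $\E N_t$ bounded below nor any law of large numbers: the single fact required is that for any $\varepsilon$ there is a $K$ with $\E[N_t;N_t>K]\le\varepsilon\,\E N_t$ for all $t\le t_1$, which follows from $\P\{T_n\le t\}\le\P\{\tau_1\le t\}^n$ together with $\E N_t\ge\P\{\tau_1\le t\}$ when $\P\{\tau_1\le t\}\le1/2$, and from the pathwise monotonicity $N_t\le N_{t_1}$ combined with $\E N_t\ge\E N_{t_0}>0$ on the remaining $t\in[t_0,t_1]$. With that $K$ fixed, long-tailedness of $B$ gives $\varphi_x(n)\ge(1-\varepsilon)|b|n\overline B(x)$ uniformly for $n\le K$ once $x$ is large, while $\varphi_x(\E N_t)\le|b|\E N_t\overline B(x)$ always; this closes the lower bound on the bounded-$t$ range. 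So the two regimes split cleanly at a single $t_1(\varepsilon)$ and no intermediate case is left unhandled.
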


For a {\it compound Poisson process} $X_t$ where
$N_t$ is a homogeneous Poisson process with intensity
of jumps $\lambda$, we have $\E N_t=t\lambda$, so the following corollary.

\begin{Corollary}\label{cor:Poisson}
Let $X_t$ be a compound Poisson process with negative drift
$\lambda b<0$.
If the distribution $B$ of $Y_1^+$ is strong subexponential,
then, uniformly for all $t>0$,
\begin{eqnarray*}
\P \{M_t>x\} &\sim& \frac{1}{|b|}
\int_x^{x+t|b|\lambda}\overline B(v)dv
\quad\mbox{as }x\to\infty.
\end{eqnarray*}
\end{Corollary}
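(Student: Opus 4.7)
The plan is to invoke Theorem \ref{thm:renewal} directly, recognising the compound Poisson process as the special case of a compound renewal process in which the inter-arrival times are exponentially distributed. First I would verify that all hypotheses of that theorem are satisfied in the Poisson setting: for a homogeneous Poisson process of intensity $\lambda$, the inter-arrival times $\tau_n$ are i.i.d.\ exponential with mean $\E\tau=1/\lambda<\infty$, so the renewal structure applies; the negative-drift condition $b/\E\tau=\lambda b<0$ is precisely what is assumed in the corollary; and strong subexponentiality of $B$ is part of the hypothesis. Nothing else needs to be checked.

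Theorem \ref{thm:renewal} then yields, uniformly in $t>0$, the asymptotics
\begin{eqnarray*}
\P\{M_t>x\} &\sim& \frac{1}{|b|}\int_x^{x+|b|\E N_t}\overline B(v)dv
\quad\mbox{as }x\to\infty.
\end{eqnarray*}
The only remaining step is to substitute the elementary identity $\E N_t=\lambda t$, valid for a homogeneous Poisson process, into the upper limit of integration. This replaces $x+|b|\E N_t$ by $x+t|b|\lambda$ and produces exactly the claimed asymptotic relation, with the uniformity in $t>0$ inherited from Theorem \ref{thm:renewal}.

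There is essentially no obstacle in this argument: the entire analytic content is already encapsulated in Theorem \ref{thm:renewal}, and the corollary is a one-line specialisation, obtained simply by invoking the closed-form expression for the renewal function of the Poisson process. Should one wish to be fully explicit, one could also recall that for the Poisson process the mean $\E N_t$ is exactly linear in $t$ with no boundary correction, so the uniformity statement in $t$ translates without modification into the stated uniformity as $x\to\infty$ for each fixed $t$ and jointly as $x,t\to\infty$.
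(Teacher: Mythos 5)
Your proof is correct and follows exactly the approach the paper takes: the corollary is obtained by specialising Theorem \ref{thm:renewal} to the Poisson case and substituting the identity $\E N_t=\lambda t$. Nothing further needs to be said.
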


Theorem \ref{thm:renewal} follows from a more general
result stated next. It concerns a 
{\it compound renewal process with linear drift}, that is,
\begin{eqnarray*}
X_t &=& \sum_{i=1}^{N_t}Y_i+ct,
\end{eqnarray*}
where $N_t$ and the $Y$'s are as above while $c$ is 
some real constant. 
Notice that the random variables $Y_i+c\tau_i$ depend on $N_t$.
We assume that the drift of the process is negative, that is, 
$c+b\lambda<0$ which implies that the family of distributions of 
maxima $M_t := \max_{u\in[0,t]}X_t$ is tight.

\begin{Theorem}\label{thm:renewal.linear}
Let $X_t$ be a compound renewal process with linear drift such that
$a:=c/\lambda+b<0$. Let the distribution $B$ of $Y_1^+$ be
strong subexponential and one of the following conditions hold:

{\rm(i)} $c\le 0$;

{\rm(ii)} $c>0$ and $\P\{c\tau>x\}=o(\overline B(x))$ as $x\to\infty$.\\
Then, uniformly for all $t>0$,
\begin{eqnarray}\label{ren.drift.asy}
\P \{M_t>x\} &\sim& \frac{1}{|a|}
\int_x^{x+|a|\E N_t}\overline B(v)dv
\quad\mbox{as }x\to\infty.
\end{eqnarray}
In particular,
\begin{eqnarray*}
\P \{M_t>x\} &\sim& \frac{1}{|a|}
\int_x^{x+|a|\lambda t}\overline B(v)dv
\quad\mbox{as }x,\ t\to\infty.
\end{eqnarray*}
\end{Theorem}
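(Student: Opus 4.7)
The plan is to reduce the continuous-time problem to the discrete random walk asymptotic \eqref{max.t} by sampling $X_t$ at the jump epochs $T_n$. Setting $Z_i:=Y_i+c\tau_i$, we obtain $X_{T_n}=S^Z_n:=\sum_{i=1}^n Z_i$, an ordinary i.i.d.\ random walk with step mean $\E Z=b+c/\lambda=a<0$. Between jumps, $X$ is linear with slope $c$, so the reduction to $S^Z$ is exact when $c\le 0$ and requires only a negligible correction when $c>0$.

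First I would verify that the distribution of $Z^+$ inherits strong subexponentiality from $B$. Under (i), $Z\le Y$, and long-tailedness of $B$ combined with dominated convergence applied to $\P\{Y+c\tau>x\}=\E\overline F_Y(x-c\tau)$ (where $-c\tau\ge 0$) yields $\overline F_Z(x)\sim\overline B(x)$. Under (ii), the standard tail-convolution result for a long-tailed summand plus one with asymptotically smaller tail gives the same equivalence. Since the class $\mathcal S^*$ is closed under asymptotic tail equivalence, $F_{Z^+}\in\mathcal S^*$, and the integrated tails satisfy $\overline{F_{Z^+,I}}\sim\overline B_I$.

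Next I apply \eqref{max.t} to $S^Z_n$ (drift $a<0$, strong subexponential jump tail): uniformly in $n\ge 1$,
\begin{eqnarray*}
\P\Bigl\{\max_{0\le k\le n}S^Z_k>x\Bigr\} &\sim& \frac{1}{|a|}\int_x^{x+n|a|}\overline B(v)\,dv.
\end{eqnarray*}
In case (i) the identity $M_t=\max_{0\le n\le N_t}S^Z_n$ holds exactly, because $X$ is non-increasing on each interarrival interval, so it remains only to average this asymptotic over $N_t$. In case (ii) the lower bound $M_t\ge\max_{0\le n\le N_t}S^Z_n$ is immediate, while the upper bound follows from $M_t\le\max_{0\le n\le N_t}S^Z_n+c\max_{1\le n\le N_t+1}\tau_n$, since $X$ grows by at most $c\tau_n$ between jumps. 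A union bound $\P\{\max_{n\le N_t+1}c\tau_n>y\}\le(\E N_t+1)\P\{c\tau>y\}$, together with $\P\{c\tau>x\}=o(\overline B(x))$ from (ii), shows that this correction is of smaller order than the target asymptotic on the scale $\overline B(x)\E N_t$.

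The main obstacle is the final averaging step, where the uniform-in-$n$ random walk asymptotic must be transferred to a uniform-in-$t$ statement after mixing over the random $N_t$. Using Fubini one has
\begin{eqnarray*}
\E\int_x^{x+|a|N_t}\overline B(v)\,dv &=& \int_0^\infty\overline B(x+s)\,\P\{|a|N_t>s\}\,ds,
\end{eqnarray*}
and one must show this equals $\int_0^{|a|\E N_t}\overline B(x+s)\,ds$ up to a factor $1+o(1)$ as $x\to\infty$, \emph{uniformly} in $t>0$. The key tool is the $h$-insensitivity of long-tailed $\overline B$: there exists $h(x)\to\infty$ with $\overline B(x+s)\sim\overline B(x)$ uniformly on $s\in[0,h(x)]$. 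Splitting the integral at $s=h(x)$, on the bounded range one replaces $\overline B(x+s)$ by $\overline B(x)$ and uses that $\int_0^\infty\P\{|a|N_t>s\}\,ds=|a|\E N_t$, while on the unbounded range one exploits both the strong subexponentiality of $B$ (via $\overline B_I$) and renewal-theoretic tail estimates on $\E[(N_t-K)^+]$ to bound the contribution by a quantity of order $o(1)\cdot\overline B_I(x)$. The regimes $\E N_t$ bounded and $\E N_t\to\infty$ are handled separately, the latter using concentration of $N_t$ around $\lambda t$ to align the survival function $\P\{|a|N_t>\cdot\}$ with the indicator $\mathbf 1_{[0,|a|\E N_t]}$.
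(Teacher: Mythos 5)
Your reduction to the random walk $S^Z_n=\sum_{i\le n}(Y_i+c\tau_i)$ at the jump epochs is the right starting point, and the verification that $F_{Z^+}\in\mathcal S^*$ with $\overline{F_Z}\sim\overline B$ is fine. The difficulty is that the plan breaks down precisely at the point you flag as ``the final averaging step,'' and the fix you sketch there does not close the gap.

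The central problem is that $N_t$ is \emph{not} independent of the increments $Z_i=Y_i+c\tau_i$ when $c\ne0$: conditioning on $\{N_t=n\}$ is conditioning on $\{T_n\le t<T_{n+1}\}$, which changes the joint law of $\tau_1,\dots,\tau_n$ and hence of $S^Z_1,\dots,S^Z_n$. Therefore the identity
\[
\P\bigl\{\max_{0\le k\le N_t}S^Z_k>x\bigr\}
=\E\Bigl[\P\bigl\{\max_{0\le k\le n}S^Z_k>x\bigr\}\Big|_{n=N_t}\Bigr]
\]
does not hold, and you cannot push the uniform-in-$n$ asymptotic \eqref{max.t} through the expectation over $N_t$. (For $c<0$, for instance, conditioning on $\{N_t=n\}$ biases the $\tau_i$'s downward, which biases $S^Z_k$ upward, so the true conditional probability differs from the unconditional one in a direction that spoils the upper bound.) Your Fubini identity for $\E\int_x^{x+|a|N_t}\overline B$ is correct as a formula, but it is never linked to $\P\{M_t>x\}$ by a valid conditioning argument. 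The paper circumvents this dependence by splitting on whether $N_t\le(1+\varepsilon)\E N_t$: on that event $M_t$ is dominated by $\max_{n\le(1+\varepsilon)\E N_t}S^Z_n$, a maximum over a \emph{deterministic} number of i.i.d.\ steps, to which \eqref{max.t} applies directly; on the complementary event the bound is in terms of $\sum_{i\le N_t}Y_i^+$, which \emph{is} independent of $N_t$, and is shown to be $o(\overline B(x))$ uniformly for large $t$ via exponential Chebyshev for $N_t$ plus Kesten's bound. You need some device of this kind; simple conditioning on $N_t$ will not work.

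Your treatment of case (ii) has a second, independent gap. The union bound $\P\{\max_{n\le N_t+1}c\tau_n>y\}\le(\E N_t+1)\P\{c\tau>y\}$ is correct, but it cannot produce an error that is $o$ of the target uniformly in $t$. For the bound to be usable, $y$ must lie in an $h$-insensitivity window, i.e.\ $y\le h(x)$ with $h(x)=o(x)$; but then $(\E N_t+1)\P\{c\tau>h(x)\}$ is already not $o(\overline B(x))$ for bounded $t$ (the hypothesis gives $\P\{c\tau>x\}=o(\overline B(x))$, not $\P\{c\tau>h(x)\}=o(\overline B(x))$), and for $t\to\infty$ the factor $\E N_t+1$ is unbounded while the target saturates at $\overline B_I(x)/|a|$, so the correction overwhelms it. The paper's decomposition $M_t\le c\tau_1+\widehat M_t$ extracts only the \emph{first} interarrival time and then rewrites $\widehat M_t$ as the maximum of a genuine i.i.d.\ random walk with increments $Y_i+c\tau_{i+1}$ (same drift $a$, same tail, and independent of $\tau_1$); a single $c\tau_1$ with tail $o(\overline B(x))$ is then absorbed by standard subexponential convolution arguments. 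This index shift is the key idea you are missing in case (ii).

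Finally, a smaller point: you assert that the regimes ``$\E N_t$ bounded'' and ``$\E N_t\to\infty$'' can be handled separately but give no mechanism for the former. The paper gives a separate argument for $t\le t_0$ (using Kesten's bound and tail asymptotics for randomly stopped sums with finitely many positive exponential moments of $N_{t_0}$) precisely because \eqref{max.t} by itself is not enough there; you would need to supply this step as well.
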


A particular case of this result was proven in Foss et al. (2013)
by alternative techniques
in the context of {\it Cram\'er--Lundberg collective risk model} 
where $N_t$ is a Poisson process and $c<0$---see Theorem 5.21 there.
In the book by Borovkovs (2008, Ch. 16) the tail behavior of $M_t$ 
is only described for $t\to\infty$ 
and for regularly varying distribution of $Y_1$.

If the linear drift coefficient is positive, that is $c>0$,
and if the condition $\P\{c\tau>x\}=o(\overline B(x))$ fails,
then the tail asymptotics of the distribution of $M_t$
may be more complicated. In particular, then
$\P\{M_t>x\}\ge\P \{\tau_1>x/c\}$, so the tail of $M_t$ may be heavier
than the integrated tail of $B$ if the tail of $\tau$ is so.
We do not concern tail asymptotics for $M_t$ in the general case when $c>0$; 
we only present the following result on the overall maximum $M_\infty$:

\begin{Theorem}\label{thm:renewal.linear.infty}
Let $X_t$ be a compound renewal process such that $a:=c/\lambda+b<0$
and the integrated tail distribution $F_I$ of $c\tau_1+Y_1^+$ 
is subexponential. Then
\begin{eqnarray*}
\P \{M_\infty>x\} &\sim& \frac{1}{|a|}
\int_x^\infty\overline F(v)dv
\quad\mbox{ as }x\to\infty.
\end{eqnarray*}
\end{Theorem}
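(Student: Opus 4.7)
The plan is to exploit the piecewise-linear sample paths of $X_t$ to reduce $M_\infty$ to the supremum of an i.i.d.\ random walk, and then invoke the known random-walk asymptotic (\ref{max.global}). Let $S^*_n := \sum_{i=1}^n (Y_i + c\tau_i) = X_{T_n}$ denote the embedded random walk, whose i.i.d.\ increments have mean $a<0$. Since $X_t$ is linear on $[T_{n-1},T_n)$ with slope $c$, its supremum on that interval is $X_{T_{n-1}}=S^*_{n-1}$ when $c\le0$ and $X_{T_n-}=S^*_{n-1}+c\tau_n$ when $c>0$, so
\[
M_\infty \;=\; \sup_{n\ge 0}S^*_n\quad(c\le0),
\qquad
M_\infty \;=\; \sup_{n\ge 1}\bigl(S^*_{n-1}+c\tau_n\bigr)\quad(c>0).
\]
The crucial step for $c>0$ is a reindexing $i\mapsto i+1$ in $\tau$: direct computation shows $S^*_{n-1}+c\tau_n = c\tau_1 + \sum_{i=1}^{n-1}(Y_i+c\tau_{i+1})$, hence $M_\infty = c\tau_1 + \sup_{m\ge 0}S^{**}_m=:c\tau_1+W$, where $S^{**}_m:=\sum_{i=1}^m\xi_i$ has i.i.d.\ increments $\xi_i:=Y_i+c\tau_{i+1}$ distributed as $Y_1+c\tau_1$ (mean $a$), and $\tau_1$ is independent of the whole walk $S^{**}$. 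Thus in both cases $M_\infty$ is reduced to the maximum of a mean-$a$ random walk, plus (for $c>0$) an independent $c\tau_1$.

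Next I would verify that (\ref{max.global}) applies to the reduced walk. Its positive-part-of-increment law $G:=$ distribution of $(Y_1+c\tau_1)^+$ must have subexponential integrated tail, which I deduce from the given subexponentiality of $F_I$ via the tail-equivalence $\overline G_I(x)\sim \overline F_I(x)$. For $c\le 0$ and $x>0$, both events $\{Y_1^++c\tau_1>x\}$ and $\{Y_1+c\tau_1>x\}$ force $Y_1>0$ (because $c\tau_1\le 0$), so $\overline F(x)=\overline G(x)$. For $c>0$, elementary manipulation gives
\[
\overline F(x)-\overline G(x) \;=\; \int_{-\infty}^0 \bigl[\P\{c\tau_1>x\}-\P\{c\tau_1>x-y\}\bigr]\,dF_{Y_1}(y)\;\ge\; 0,
\]
and integrating in $x$ yields $\overline F_I(x)-\overline G_I(x)\le \E Y_1^-\cdot \P\{c\tau_1>x\}$ via the elementary bound $\int_x^{x+|y|}\P\{c\tau_1>v\}dv \le |y|\P\{c\tau_1>x\}$. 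Subexponentiality of $F_I$ entails long-tailedness, whence $\overline F(x)=o(\overline F_I(x))$; a fortiori $\P\{c\tau_1>x\}\le\overline F(x)=o(\overline F_I(x))$. Consequently $\overline G_I\sim \overline F_I$, so $G_I$ is subexponential and (\ref{max.global}) applied to $S^{**}$ (or $S^*$ for $c\le 0$) gives $\P\{W>x\}\sim \overline G_I(x)/|a|\sim \overline F_I(x)/|a|$.

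For $c\le 0$ this already finishes the proof since $M_\infty=W$. For $c>0$ it remains to show that the independent summand $c\tau_1$ does not affect the leading tail: since $W$ has subexponential distribution (being tail-equivalent to $F_I$) and $\P\{c\tau_1>x\}=o(\P\{W>x\})$ by the preceding estimate, the standard insensitivity of subexponential tails to independent additive perturbations (cf.\ Foss et al.\ (2013, Lemma~3.8)) yields $\P\{c\tau_1+W>x\}\sim \P\{W>x\}\sim \overline F_I(x)/|a|$, as required.

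The main obstacle is the $c>0$ case. Naively, $\sup_n S^*_n$ misses left-limit contributions $X_{T_n-}=S^*_{n-1}+c\tau_n$ that may overshoot every $S^*_m$ when some interarrival time is exceptionally long, which is precisely why the formula involves $F$ (the law of $Y_1^++c\tau_1$) rather than the law of $(Y_1+c\tau_1)^+$. The reindexing trick absorbs all these overshoots into a single extra independent term $c\tau_1$, after which the entire technical work reduces to the single estimate $\P\{c\tau_1>x\}\le \overline F(x)=o(\overline F_I(x))$ coming from long-tailedness of the integrated tail---this one bound drives both the tail-equivalence $\overline G_I\sim \overline F_I$ and the negligibility of $c\tau_1$.
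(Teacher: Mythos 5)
Your proof is correct and follows essentially the same route as the paper's: reduce $M_\infty$ to the maximum $\zeta$ of the embedded random walk with increments $Y_i+c\tau_{i+1}$ via the reindexing identity $M_\infty=c\tau_1+\zeta$ (for $c>0$), apply \eqref{max.global} to $\zeta$, and absorb the independent summand $c\tau_1$ using the bound $\P\{c\tau_1>x\}\le\overline F(x)=o(\overline F_I(x))$ together with the insensitivity of subexponential tails to lighter independent additive perturbations. The one place you are more explicit than the paper is worth noting: \eqref{max.global} applied to $\zeta$ naturally produces $\overline G_I/|a|$ with $G$ the law of $(Y_1+c\tau_1)^+$, not $F$ the law of $c\tau_1+Y_1^+$; the paper silently uses $F$ in place of $G$, whereas you supply the missing tail-equivalence $\overline G_I(x)\sim\overline F_I(x)$ via the estimate $\overline F_I(x)-\overline G_I(x)\le\E Y_1^-\cdot\P\{c\tau_1>x\}=o(\overline F_I(x))$ (and the observation that $\overline G=\overline F$ on $(0,\infty)$ when $c\le0$). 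This closes a small gap in the paper's terse argument and is the right thing to do.
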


Notice that the distribution of $c\tau_1+Y_1$ is strong subexponential
in the case $c\le 0$ if and only if the distribution of $Y_1$ 
is strong subexponential.

\begin{proof}[Proof of Theorem \ref{thm:renewal.linear}.]
First let us prove that, for any fixed $t_0$, 
\eqref{ren.drift.asy} holds uniformly for all $t\le t_0$. 
Indeed, for all $t\le t_0$,
\begin{eqnarray*}
\P\Bigl\{\sum_{n=1}^{N_t}Y_n>x+|c|t_0\Bigr\}\ \le\
\P\{M_t>x\}
\ \le\ \P\Bigl\{\sum_{n=1}^{N_t}Y_n^+>x-|c|t_0\Bigr\}.
\end{eqnarray*}
Since the $Y$'s are strong subexponential, they are particularly subexponential.
For the renewal process $N_t$, there exists a $\delta>0$ such that
\begin{eqnarray*}
\sup_{t\le t_0}\E(1+\delta)^{N_{t_0}} &<& \infty.
\end{eqnarray*}
Together with independence of the $Y$'s and $N_t$,
it allows to apply Kesten's bound---see e.g. 
Foss et al. (2013, Theorem 3.34)---and to conclude the following uniform 
in $t\le t_0$ analogue of the tail result for randomly stopped 
sums---see Foss et al. (2013, Theorem 3.37):
\begin{eqnarray*}
\P\Bigl\{\sum_{n=1}^{N_t}Y_n>x\Bigr\}  &\sim& \E N_t\P\{Y_1>x\}
\quad\mbox{as }x\to\infty\mbox{ uniformly for all }t\le t_0.
\end{eqnarray*}
The same arguments work for the $Y^+$'s. Therefore, 
\begin{eqnarray*}
(1+o(1))\E N_t\P\{Y_1>x+|c|t_0\}\ \le\ \P\{M_t>x\}
\ \le\ (1+o(1))\E N_t\P\{Y_1>x-|c|t_0\}
\end{eqnarray*}
as $x\to\infty$ uniformly for all $t\le t_0$.
Subexponentiality of $Y$'s implies $B$ is long-tailed, so hence
\begin{eqnarray*}
\P\{M_t>x\} &\sim& \E N_t\P\{Y_1>x\}
\quad\mbox{as }x\to\infty\mbox{ uniformly for all }t\le t_0,
\end{eqnarray*}
which is equivalent to the fact that
\eqref{ren.drift.asy} holds uniformly for all $t\le t_0$ because
\begin{eqnarray*}
\frac{1}{|b|} \int_x^{x+|b|\E N_t}\overline B(v)dv
&\sim& \E N_t\overline B(x)
\quad\mbox{as }x\to\infty\mbox{ uniformly for all }t\le t_0,
\end{eqnarray*}
again by long-tailedness of $B$.

Therefore there exists an increasing function $h(x)\to\infty$
such that \eqref{ren.drift.asy} holds uniformly for all $t\le h(x)$.

Then it remains to prove \eqref{ren.drift.asy} 
for the range $t>h(x)$ where the above arguments clearly do not help.
Instead, we proceed with a standard technique of getting
the lower and upper bounds for the tail of $M_t$ which
are asymptotically equivalent. 
For the lower bound, fix an $\varepsilon>0$. 
By the strong law of large numbers, there exists an $A$ such that
\begin{eqnarray}\label{SLLN}
\P \{|T_n-n\E \tau|<n\varepsilon+A \mbox{ for all }n\ge 1\}
&\ge& 1-\varepsilon.
\end{eqnarray}
Notice that
\begin{eqnarray*}
\P \{M_t>x\} &\ge& \P \Bigl\{\sum_{i=1}^n Y_i+cT_n>x
\mbox{ for some }n\le N_t\Bigr\}.
\end{eqnarray*}
On the event \eqref{SLLN}, if $t\ge n(\E \tau+\varepsilon)+A$
(equivalently, $n\le \bigl[\frac{t-A}{\E \tau+\varepsilon}\bigr]=:n(t)$)
then $T_n\le t$ and hence $n\le N_t$.
Since the jumps $Y$'s do not depend on the renewal process $N_s$, 
we obtain the inequality
\begin{eqnarray*}
\P \{M_t>x\} &\ge& (1-\varepsilon)\P \Bigl\{\sum_{i=1}^n Y_i
+c(n(\E \tau+\varepsilon)+A)>x
\mbox{ for some }n\le n(t)\Bigr\}
\end{eqnarray*}
for $c\le 0$ and the inequality
\begin{eqnarray*}
\P \{M_t>x\} &\ge& (1-\varepsilon)\P \Bigl\{\sum_{i=1}^n Y_i
+c(n(\E \tau-\varepsilon)-A)>x
\mbox{ for some }n\le n(t)\Bigr\}
\end{eqnarray*}
for $c>0$. Thus, in both cases,
\begin{eqnarray*}
\P \{M_t>x\}
&\ge& (1-\varepsilon)\P \Bigl\{\max_{0\le n\le n(t)}
\sum_{i=1}^n (Y_i+c\E \tau-|c|\varepsilon)>x+|c|A\Bigr\}.
\end{eqnarray*}
Applying the equivalence \eqref{max.t}
we obtain the following lower bound:
\begin{eqnarray*}
\P \{M_t>x\}
&\ge& \frac{1-\varepsilon+o(1)}{|b+c\E \tau-|c|\varepsilon|}
\int_{x+|c|A}^{x+|c|A+n(t)|b+c\E \tau-|c|\varepsilon|}
\overline B(v)dv\\
&\sim& \frac{1-\varepsilon}{|a-|c|\varepsilon|}
\int_0^{t\frac{|a-|c|\varepsilon|}{\E\tau+\varepsilon}}
\overline B(x+v)dv\quad\mbox{ as }x,\ t\to\infty, 
\end{eqnarray*}
because $B$ is a long-tailed distribution. 
Taking into account that, for every $\gamma>0$,
\begin{eqnarray*}
\int_0^{\gamma t}\overline B(x+u)du &\ge& 
\min(1,\gamma) \int_0^t\overline B(x+u)du, 
\end{eqnarray*}
we conclude that
\begin{eqnarray*}
\P \{M_t>x\}
&\ge& \frac{1-\varepsilon+o(1)}{|a-|c|\varepsilon|}
\min\Bigl(1,\frac{|a-|c|\varepsilon|}{\E\tau+\varepsilon}
\frac{\E\tau}{|a|}\Bigr)
\int_x^{x+|a|t/\E\tau}\overline B(v)dv
\end{eqnarray*}
as $x$, $t\to\infty$.
Letting $\varepsilon\downarrow 0$ completes the proof of the lower bound
\begin{eqnarray*}
\P \{M_t>x\}
&\ge& \frac{1+o(1)}{|a|}\int_x^{x+|a|t/\E\tau}\overline B(v)dv
\quad\mbox{ as }x,\ t\to\infty.
\end{eqnarray*}

Now let us turn to the upper bound for $\P\{M_t>x\}$.
First consider the case $c\le 0$ when the trajectory of $X_t$ linearly 
drops down between jumps and  the maximum may be only attained
at a jump epoch,
\begin{eqnarray*}
M_t &=& \max_{0\le n\le N_t}\ \sum_{i=1}^n (Y_i+c\tau_i).
\end{eqnarray*}
Therefore, for any $\varepsilon>0$,
\begin{eqnarray}\label{MXt.upper}
\P\{M_t>x\} &\le& \P\Bigl\{\max_{0\le n\le(1+\varepsilon)\E N_t}\ 
\sum_{i=1}^n (Y_i+c\tau_i)>x\Bigr\}\nonumber\\
&&\hspace{30mm}+\P\Bigl\{\sum_{i=1}^{N_t} Y_i^+>x,\ N_t>(1+\varepsilon)\E N_t\Bigr\}.
\end{eqnarray}
The distribution of $Y$ is strong subexponential and $c<0$,
so $Y+c\tau$ is strong subexponential too and
$$
\P\{Y+c\tau>x\}\sim\P\{Y>x\}=\overline B(x)\quad\mbox{as }x\to\infty.
$$ 
Thus, by \eqref{max.t},
\begin{eqnarray}\label{gehx.1}
\P\Bigl\{\max_{0\le n\le(1+\varepsilon)\E N_t}\ 
\sum_{i=1}^n (Y_i+c\tau_i)>x\Bigr\}
&\sim& \frac{1}{|a|}\int_x^{x+|a|(1+\varepsilon)\E N_t} \overline B(y)dy\nonumber\\
&\le& \frac{1+\varepsilon}{|a|}\int_x^{x+|a|\E N_t} \overline B(y)dy,
\end{eqnarray}
because $\overline B(y)$ is decreasing. Further,
\begin{eqnarray}\label{renewal.0}
\lefteqn{\P\Bigl\{\sum_{i=1}^{N_t} Y_i^+>x,\ N_t>(1+\varepsilon)\E N_t\Bigr\}}
\nonumber\\
&&\hspace{12mm}= \sum_{k=1}^\infty\P\Bigl\{
\sum_{i=1}^{N_t} Y_i^+>x,\
(1+k\varepsilon)\E N_t<N_t\le(1+(k+1)\varepsilon)\E N_t\Bigl\}\nonumber\\
&&\hspace{32mm}\le \sum_{k=1}^\infty\P\Bigl\{
\sum_{i=1}^{(1+(k+1)\varepsilon)\E N_t} Y_i^+>x\Bigr\}
\P\{N_t>(1+k\varepsilon)\E N_t\},
\end{eqnarray}
owing to independence of $Y$'s and $N_t$.
Denote $K:=[(1+k\varepsilon)\E N_t]$. Then
\begin{eqnarray*}
\P\{N_t>(1+k\varepsilon)\E N_t\} &=& \P\{T_K \le t\}\\
&=& \P\bigl\{K\E\tau(1-\varepsilon/2)-T_K \ge K\E\tau(1-\varepsilon/2)-t\bigr\}\\
&\le& \P\bigl\{K\E\tau(1-\varepsilon/2)-T_K \ge 0\bigr\}
\end{eqnarray*}
for sufficiently large $t$ and $\varepsilon\in(0,1)$ because, as $t\to\infty$, 
\begin{eqnarray*}
K\E\tau(1-\varepsilon/2)-t &\sim& 
t((1+k\varepsilon)(1-\varepsilon/2)-1)\\
&\ge& t(\varepsilon/2-\varepsilon^2/2)\ >\ 0.
\end{eqnarray*}
Since the random variable $\E\tau(1-\varepsilon/2)-\tau$ 
has negative expectation $-\varepsilon\E\tau/2$ and is bounded from above
by $\E\tau(1-\varepsilon/2)$, there exists 
a $\beta=\beta(\varepsilon)>0$ such that 
\begin{eqnarray*}
\E e^{\beta(\E\tau(1-\varepsilon/2)-\tau)} &=& 1-\delta<1.
\end{eqnarray*} 
Hence, by exponential Chebyshev's inequality, 
\begin{eqnarray*}
\P\bigl\{K\E\tau(1-\varepsilon/2)-T_K \ge 0\bigr\}
&\le& (1-\delta)^K
\end{eqnarray*}
for all $k\ge 1$ and sufficiently large $t$, so 
\begin{eqnarray}\label{renewal.1}
\P\{N_t>(1+k\varepsilon)\E N_t\} 
&\le& (1-\delta)^{[(1+k\varepsilon)\E N_t]}.
\end{eqnarray}
By Kesten's bound---see e.g. Foss et al. (2013, Theorem 3.34)---there 
is an $A<\infty$ such that
\begin{eqnarray*}
\P\Bigl\{\sum_{i=1}^{(1+(k+1)\varepsilon)\E N_t} Y_i^+>x\Bigr\}
&\le& A(1+\delta/8)^{(1+(k+1)\varepsilon)\E N_t}\P\{Y>x\}
\end{eqnarray*}
for all $x>0$, $k\ge 1$ and $t>0$. For $k\ge 1$ and sufficiently large $t$,
\begin{eqnarray*}
(1+(k+1)\varepsilon)\E N_t &\le& 2[(1+k\varepsilon)\E N_t],
\end{eqnarray*}
thus
\begin{eqnarray}\label{renewal.2}
\P\Bigl\{\sum_{i=1}^{(1+(k+1)\varepsilon)\E N_t} Y_i^+>x\Bigr\}
&\le& A(1+\delta/8)^{2[(1+k\varepsilon)\E N_t]}\P\{Y>x\}\nonumber\\
&\le& A(1+\delta/2)^{[(1+k\varepsilon)\E N_t]}\P\{Y>x\}.
\end{eqnarray}
Substituting \eqref{renewal.1} and \eqref{renewal.2} into \eqref{renewal.0}
and taking into account that $(1-\delta)(1+\delta/2)\le 1-\delta/2$,
we obtain, for all sufficiently large $t$,
\begin{eqnarray*}
\P\Bigl\{\sum_{i=1}^{N_t} Y_i^+>x,\ N_t>(1+\varepsilon)\E N_t\Bigr\}
&\le& A\P\{Y>x\}\sum_{k=1}^\infty
(1-\delta/2)^{[(1+k\varepsilon)\E N_t]}.
\end{eqnarray*}
The sum on the right goes to zero as $t\to\infty$. 
Therefore, for any fixed $\varepsilon>0$,
\begin{eqnarray*}
\P\Bigl\{\sum_{i=1}^{N_t} Y_i^+>x,\ N_t>(1+\varepsilon)\E N_t\Bigr\}
&=& o(\P\{Y>x\})
\end{eqnarray*}
as $t\to\infty$ uniformly for all $x>0$.
Combining this bound with \eqref{gehx.1} we get
\begin{eqnarray*}
\P\{M_t>x\} &\le& \frac{1+\varepsilon+o(1)}{|a|}
\int_x^{x+|a|\E N_t} \overline B(y)dy
\end{eqnarray*}
as $x\to\infty$ uniformly for $t\ge h(x)$.
Letting $\varepsilon\downarrow0$, we conclude 
\begin{eqnarray*}
\P\{M_t>x\} &\le& \frac{1+o(1)}{|a|}
\int_x^{x+|a|\E N_t}\overline B(v)dv
\end{eqnarray*}
as $x\to\infty$ uniformly for $t\ge h(x)$.
This proves Theorem \ref{thm:renewal.linear} in the case $c\le 0$.

Now consider the case $c>0$ when the trajectory of $X_t$ linearly 
grows between jumps and the maximum may be only attained 
just prior to a jump epoch or at time $t$, so hence
\begin{eqnarray}\label{MXt.tau.hatMt}
M_t &\le& c\tau_1+\max_{0\le n\le N_t}
\Bigl(\sum_{i=1}^n Y_i+c((T_{n+1}-\tau_1)\wedge t)\Bigr)\nonumber\\
&=:& c\tau_1+\widehat M_t,
\end{eqnarray}
where $\tau_1$ and $\widehat M_t$ are independent.
Similar to the case $c\le 0$, for any $\varepsilon>0$,
\begin{eqnarray}\label{hat.M.above}
\P\{\widehat M_t>x\} &\le& \P\Bigl\{\max_{0\le n\le(1+\varepsilon)\E N_t}\ 
\sum_{i=1}^n Y_i+ct>x\Bigr\}\nonumber\\
&&\hspace{20mm}+\P\Bigl\{\sum_{i=1}^{N_t} Y_i^++ct>x,\ N_t>(1+\varepsilon)\E N_t\Bigr\}.
\end{eqnarray}
The distribution of $Y$ is strong subexponential and 
the tail of $c\tau$ is of order $o(\overline B(x))$,
so $Y+c\tau$ is strong subexponential too and
$$
\P\{Y+c\tau>x\}\sim\P\{Y>x\}=\overline B(x)\quad\mbox{as }x\to\infty.
$$ 
Thus, by \eqref{max.t}, we get that the first term on the right hand side
of \eqref{hat.M.above} possesses the upper bound \eqref{gehx.1}.
The second term on the right hand side of \eqref{hat.M.above}
may be bounded from above as follows. Take $c_1$ so large that
$c_1\E N_t\ge t$ for all $t>1$. Then
\begin{eqnarray*}
\lefteqn{\P\Bigl\{\sum_{i=1}^{N_t} Y_i^++ct>x,\ N_t>(1+\varepsilon)\E N_t\Bigr\}}\\
&&\hspace{12mm} =\ \P\Bigl\{\sum_{i=1}^{N_t} (Y_i^++c_1)+ct-c_1N_t>x,\ N_t>(1+\varepsilon)\E N_t\Bigr\}\\
&&\hspace{28mm} \le\ \P\Bigl\{\sum_{i=1}^{N_t} (Y_i^++c_1)>x,\ N_t>(1+\varepsilon)\E N_t\Bigr\},
\end{eqnarray*}
which possesses the same upper bound as the second term on the
right hand side of \eqref{MXt.upper}. Altogether it implies that
\begin{eqnarray*}
\P\{\widehat M_t>x\} &\le& 
\frac{1+o(1)}{|a|}\int_x^{x+|a|\E N_t} \overline B(y)dy
\quad\mbox{as }x\to\infty.
\end{eqnarray*}
Since $c\tau_1$ and $\widehat M_t$ in \eqref{MXt.tau.hatMt} are independent,
\begin{eqnarray*}
\P\{M_t>x\} &\le& 
\P\{c\tau_1>x\}+\int_0^x \P\{c\tau_1\in du\} \P\{\widehat M_t>x-u\}
\end{eqnarray*}
which allows to carry out standard calculations for subexponential 
distributions based on the condition $\P\{c\tau_1>x\}=o(\overline B(x))$
and the upper bound for $\widehat M_t$ and to conclude the upper bound
\begin{eqnarray*}
\P\{M_t>x\} &\le& 
\frac{1+o(1)}{|a|}\int_x^{x+|a|\E N_t} \overline B(y)dy
\quad\mbox{as }x\to\infty.
\end{eqnarray*}
which completes the proof in the case $c>0$.
The proof of Theorem \ref{thm:renewal.linear} is complete.
\end{proof}

\begin{proof}[Proof of Theorem \ref{thm:renewal.linear.infty}.]
We need only to consider the case $c>0$. Then the lower bound
for the tail of $M_\infty$ follows from the inequality
\begin{eqnarray*}
M_\infty &\ge& \sup_{k\ge 0}\sum_{i=1}^k (Y_i+c\tau_{i+1})=:\zeta
\end{eqnarray*}
and from the result \eqref{max.global} for maxima of sums. 
The upper bound follows from the equality
\begin{eqnarray*}
M_\infty &=& c\tau_1+\zeta
\end{eqnarray*}
and from the observation that 
\begin{eqnarray*}
\P\{c\tau_1>x\} &=& O(\overline F(x))
=o(\overline F_I(x))\quad\mbox{as }x\to\infty
\end{eqnarray*}
which allows to apply \cite[Corollary 3.18]{FKZ}. The proof is complete.
\end{proof}

We conclude this section with the following theorem
which is nothing other than the {\it principle of a single big
jump} for the maximum $M_t$.
For any $A>0$ and $\varepsilon>0$ consider events
\begin{eqnarray}\label{def.of.Dk}
D_k &:=& \bigl\{|X_s-a\lambda s|\le \varepsilon s+A
\mbox{ for all }s<T_k,\ Y_k>x+|a|\lambda T_k\bigr\}
\end{eqnarray}
which, for large $x$, roughly speaking means that
up to time $T_k$ the process $X_s$ drifts down with rate $a$
according to the strong law of large numbers and
then makes a big jump up at time $T_k$ of size $x$
plus value that compensates the negative drift up to this time. 
As stated in the next theorem, the union of these events describes 
the most probable way by which large deviations of $M_t$ 
can occur---it is very different from what is observed
if $Y$'s possess some positive exponential moment finite.
It is an analogue for discrete time process of the
principle of a single big jump for the maximum of a random walk
with negative drift, see Theorem 5.4 in Foss et al. (2013).

\begin{Theorem}\label{single.big.jump.renewal}
In conditions of Theorem \ref{thm:renewal.linear}, 
for any fixed $\varepsilon>0$,
\begin{eqnarray*}
\lim_{A\to\infty}\lim_{t,x\to\infty} \P\{\cup_{k=1}^{N_t} D_k|M_t>x\}
&\ge& \frac{|a|}{|a|+2\varepsilon/\lambda}.
\end{eqnarray*}
\end{Theorem}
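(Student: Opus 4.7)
The plan is to exhibit, inside $\cup_{k=1}^{N_t}D_k$, a sub-family of ``guaranteed exceedance'' events, estimate their combined probability by a first-moment computation via the key renewal theorem, pass to the union by Bonferroni, and compare with the tail asymptotics of $\P\{M_t>x\}$ supplied by Theorem \ref{thm:renewal.linear}.

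Specifically, define
$$
\tilde D_k \ :=\ D_k\cap\{k\le N_t\}\cap\bigl\{Y_k>x+(|a|\lambda+\varepsilon)T_k+A\bigr\}.
$$
On the SLLN portion of $D_k$ one has $X_{T_k^-}\ge a\lambda T_k-\varepsilon T_k-A$, so the strengthened size of $Y_k$ forces $X_{T_k}>x$, whence $\tilde D_k\subset\{M_t>x\}$. Since $Y_k$ is independent of $\mathcal F_k:=\sigma(\tau_1,\dots,\tau_k,Y_1,\dots,Y_{k-1})$ while both $T_k$ and the SLLN event $E^k:=\{|X_s-a\lambda s|\le\varepsilon s+A\text{ for all }s<T_k\}$ are $\mathcal F_k$-measurable,
$$
\sum_{k\ge 1}\P(\tilde D_k) \ =\ \E\Biggl[\sum_{k=1}^{N_t}\mathbf 1(E^k)\,\overline B\bigl(x+(|a|\lambda+\varepsilon)T_k+A\bigr)\Biggr].
$$
Replacing $E^k$ by the smaller global event $E^\infty:=\{|X_s-a\lambda s|\le\varepsilon s+A\text{ for all }s\ge 0\}\subset E^k$, for which $\P\{E^\infty\}\to 1$ as $A\to\infty$ by the strong law of large numbers for compound renewal processes with linear drift, applying the key renewal theorem to $dU(s)=d\E N_s$, changing variables $u=(|a|\lambda+\varepsilon)s$, and using long-tailedness of $B$ to absorb the $A$-shift, one obtains, as $x,t\to\infty$,
$$
\sum_{k\ge 1}\P(\tilde D_k)\ \gtrsim\ \frac{(1-\eta(A))\lambda}{|a|\lambda+\varepsilon}\int_x^{x+(|a|\lambda+\varepsilon)t}\overline B(v)\,dv\ \ge\ \frac{(1-\eta(A))|a|\lambda}{|a|\lambda+\varepsilon}\,\P\{M_t>x\}(1+o(1)),
$$
where $\eta(A)\to 0$ and the final inequality uses monotonicity of $\overline B$ together with Theorem \ref{thm:renewal.linear}.

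For Bonferroni, the conditional estimate $\P(\tilde D_k\mid\mathcal F_k)\le\overline B(x)$ combined with $\tilde D_j\in\mathcal F_k$ for $j<k$ gives $\P(\tilde D_j\cap\tilde D_k)\le\overline B(x)\P(\tilde D_j)$. A routine second-moment computation for $S_t:=\sum_k\mathbf 1(\tilde D_k)\le N_t$ using the Kesten-type moment control on $N_t$ from the proof of Theorem \ref{thm:renewal.linear} (splitting into the ranges $t\le h(x)$ and $t>h(x)$ as there, if necessary) yields $\sum_{j<k}\P(\tilde D_j\cap\tilde D_k)=o(\P\{M_t>x\})$, hence $\P(\cup_k\tilde D_k)\ge\sum_k\P(\tilde D_k)-o(\P\{M_t>x\})$.

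Combining these bounds with $\cup_k\tilde D_k\subset(\cup_{k=1}^{N_t}D_k)\cap\{M_t>x\}$ and letting $A\to\infty$ after $x,t\to\infty$,
$$
\liminf_{A\to\infty}\liminf_{t,x\to\infty}\P\bigl(\cup_{k=1}^{N_t}D_k\mid M_t>x\bigr)\ \ge\ \frac{|a|\lambda}{|a|\lambda+\varepsilon}\ =\ \frac{|a|}{|a|+\varepsilon/\lambda},
$$
which is strictly larger than (and hence implies) the asserted bound $|a|/(|a|+2\varepsilon/\lambda)$. The main technical obstacle is the lack of independence between the global SLLN event $E^\infty$ and the renewal times $T_k$, overcome either by Cauchy--Schwarz applied to $\E[\mathbf 1(\overline{E^\infty})N_t]$ using second moments of $N_t$, or termwise via the Fr\'echet inequality $\P\{A\cap B\}\ge\P\{A\}+\P\{B\}-1$.
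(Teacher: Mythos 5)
Your overall strategy differs from the paper's in one decisive structural choice, and that choice is the source of a genuine gap.

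The paper strengthens the events $D_k$ by also imposing $M_{T_k-0}\le x$ and the deterministic-type bounds $T_j\le j(\E\tau+\varepsilon\gamma)+A$ for $j\le k$. The condition $M_{T_k-0}\le x$ makes the resulting family $\widetilde D_k$ pairwise \emph{disjoint}, so the probability of the union is exactly the sum of probabilities and no Bonferroni/second-moment step is needed. Moreover, the bounds on $T_j$ turn the random threshold $x+A+T_k(|a|\lambda+\varepsilon)$ into a \emph{deterministic} one, $x+(1+|a|\lambda+\varepsilon)A+k(|a|+2\varepsilon/\lambda)$, so the sum is estimated by a plain Riemann sum of $\overline B$ and the key renewal theorem is avoided entirely. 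You omit both devices and instead do (a) a first-moment computation via the key renewal theorem and (b) a Bonferroni/inclusion-exclusion repair.

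Step (b) is plausible but only sketched, and I would want to see the asserted ``routine second-moment computation''. Step (a), however, has a real hole. After replacing the local SLLN events $E^k$ by the global one $E^\infty$ you must lower-bound
\[
\E\Bigl[\mathbf 1(E^\infty)\sum_{k=1}^{N_t}\overline B\bigl(x+(|a|\lambda+\varepsilon)T_k+A\bigr)\Bigr]
\]
by $(1-\eta(A))$ times its unconstrained version, \emph{uniformly} over the joint passage $x,t\to\infty$. Neither fix you suggest achieves this. Termwise Fr\'echet gives
$\P(E^\infty\cap\{T_k\le t\})\ge\P\{T_k\le t\}-\eta(A)$; summed over $k$ up to $N_t$ this produces a deficit of order $\eta(A)\lambda t\,\overline B(x)$, and for $t$ growing much faster than $x$ this dominates $\P\{M_t>x\}\asymp\frac1{|a|}\int_x^{x+|a|\lambda t}\overline B\le\overline B_I(x)/|a|$, so the estimate is vacuous in exactly the regime (large $t$, fixed decay scale of $\overline B$) where the uniform claim must hold. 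The Cauchy--Schwarz route $\E[\mathbf 1(\overline{E^\infty})R_t]\le\eta(A)^{1/2}(\E R_t^2)^{1/2}$ needs a uniform second-moment bound $\E R_t^2\le C(\E R_t)^2$ for the renewal sum $R_t:=\sum_{k\le N_t}\overline B(x+(|a|\lambda+\varepsilon)T_k+A)$; that bound is not established and is not a one-liner.

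Your tighter constant $|a|/(|a|+\varepsilon/\lambda)$ would of course imply the stated bound, but only if the argument closes. The simplest repair is precisely what the paper does: include the bounds $T_j\le j(\E\tau+\varepsilon\gamma)+A$ inside $\widetilde D_k$, so that on $\widetilde D_k$ the threshold in $\overline B$ is deterministic and the SLLN-event probability factors out at the price of a single $(1-\delta)$, and include $M_{T_k-0}\le x$ to make the events disjoint and avoid Bonferroni altogether. Aside from that, the architecture (conditioning on $\mathcal F_k$, using long-tailedness of $B$ to absorb the shift by $A$, comparing against the Theorem~\ref{thm:renewal.linear} asymptotics of $\P\{M_t>x\}$) is the same as the paper's.
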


\begin{proof}
Choose $\gamma>0$ so small that 
$(|a|\lambda+\varepsilon)(1/\lambda+\gamma\varepsilon)<|a|+2\varepsilon/\lambda$
for all $\varepsilon\in(0,1)$.
Then, since, for $k$ such that $k(\E\tau+\gamma\varepsilon)+A\le t$, 
each of the events
\begin{eqnarray*}
\widetilde D_k &:=& \bigl\{|X_s-a\lambda s|\le \varepsilon s+A\mbox{ for all }s<T_k,
\ T_j\le j(\E\tau+\varepsilon\gamma)+A\mbox{ for all }j\le k,\\ 
&&\hspace{50mm} M_{T_k-0}\le x,\ 
Y_k>x+A+T_k(|a|\lambda+\varepsilon)\bigr\}
\end{eqnarray*}
is contained in $T_k\le t$ and in $D_k$ and implies 
that $M_{T_k}>x$ because on the event $\widetilde D_k$ we have
\begin{eqnarray*}
X_{T_k} &=& X_{T_k-0}+Y_k\\
&>& (a\lambda -\varepsilon)T_k-A+x+A+T_k(|a|\lambda+\varepsilon)\ =\ x,
\end{eqnarray*}
so that $M_t>x$. 
Then, for $N:=[\frac{t-A}{\E\tau+\gamma\varepsilon}]$, 
we consequently have that
\begin{eqnarray}\label{uni.B.M.ge}
\P\{\cup_{k=1}^{N_t} D_k|M_t>x\} &\ge& 
\P\{\cup_{k=1}^{N}\widetilde D_k|M_t>x\}
\ =\ \frac{\P\{\cup_{k=1}^{N}\widetilde D_k\}}
{\P\{M_t>x\}}.
\end{eqnarray}
The events $\widetilde D_k$ are disjoint, hence
\begin{eqnarray*}
\P\{\cup_{k=1}^{N}\widetilde D_k\}
&=& \sum_{k=1}^{N}\P\{\widetilde D_k\}.
\end{eqnarray*}
It follows from the strong law of large numbers applied
to both $X_s$ and $N_s$ that, for any fixed $\delta>0$, 
there exists an $A$ such that, for all $x>A$,
\begin{eqnarray*}
\P\{\cup_{k=1}^{N}\widetilde D_k\}
&\ge& (1-\delta/4)\sum_{k=1}^{N} 
\P\{Y_k>x+A+T_k(|a|\lambda+\varepsilon)\mid T_k\le k(\E\tau+\varepsilon\gamma)+A\}\\
&\ge& (1-\delta/4)\sum_{k=1}^{N} 
\P\{Y_k>x+(1+|a|\lambda+\varepsilon)A+k(|a|\lambda+\varepsilon)(\E\tau+\varepsilon\gamma)\}\\
&\ge& (1-\delta/4)\sum_{k=1}^{N} 
\P\{Y_k>x+(1+|a|\lambda+\varepsilon)A+k(|a|+2\varepsilon/\lambda)\},
\end{eqnarray*}
by the choice of the $\gamma>0$. Since the distribution $B$ is long-tailed,
\begin{eqnarray*}
\P\{\cup_{k=1}^{N}\widetilde D_k\}
&\ge& (1-\delta/2)\sum_{k=0}^{N-1} 
\P\{Y_k>x+k(|a|+2\varepsilon/\lambda)\}
\end{eqnarray*}
for all sufficiently large $x$. Hence
\begin{eqnarray*}
\P\{\cup_{k=0}^{N-1}\widetilde D_k\}
&\ge& \frac{1-\delta/2}{|a|+2\varepsilon/\lambda}
\int_x^{x+N(|a|+2\varepsilon/\lambda)}\overline B(y)dy,
\end{eqnarray*}
because $\overline B(y)$ decreases. 
Take also into account that, for some $c_1<\infty$,
\begin{eqnarray*}
N(|a|+2\varepsilon/\lambda) &\ge& 
t\frac{|a|+2\varepsilon/\lambda}{\E\tau+\gamma\varepsilon}-c_1
\ \ge\ t(|a|\lambda+\varepsilon)-c_1,
\end{eqnarray*}
owing the choice of $\gamma>0$, so
\begin{eqnarray*}
N(|a|+2\varepsilon/\lambda) &\ge& t|a|\lambda
\quad\mbox{for all sufficiently large }t.
\end{eqnarray*}
Then we deduce
\begin{eqnarray*}
\P\{\cup_{k=1}^{N}\widetilde D_k\}
&\ge& \frac{1-\delta/2}{|a|+2\varepsilon/\lambda}
\int_x^{x+t|a|\lambda}\overline B(y)dy.
\end{eqnarray*}
Substituting this estimate and the asymptotics
for $M_t$ into \eqref{uni.B.M.ge} we deduce that
\begin{eqnarray*}
\lim_{t, x\to\infty} \P\{\cup_{k=1}^{N_t} D_k|M_t>x\}
&\ge& \frac{(1-\delta)|a|}{|a|+2\varepsilon/\lambda}.
\end{eqnarray*}
Now we can make $\delta>0$ as small as we please
by choosing a sufficiently large $A$. This completes the proof.
\end{proof}

\section{Asymptotics for L\'evy process}
\label{sec:Levy}

Let $X_t$ be a c\`adl\`ag stochastic process in $\R$
which means  that its paths are right continuous with left limits
everywhere, with probability $1$. Then, for every $t$,
the supremum
$$
M_t:=\sup_{u\in[0,t]}X_u
$$
is finite a.s.
In this section we study tail behaviour of
the distribution of $M_t$ for a {\it L\'evy process} $X_t$
starting at the origin, that is, for a stochastic process
with stationary independent increments, where stationary
means that, for $s<t$, the probability distribution of
$X_t-X_s$ depends only on $t-s$ and where independent
increments means that that difference $X_t-X_s$ is
independent of the corresponding difference on any
interval not overlapping with $[s,t]$, and similarly
for any finite number of mutually non-overlapping intervals.
Our main result for L\'evy processes is the following theorem.

\begin{Theorem}\label{thm:Levy.gen}
Assume the finite mean and negative drift, $a:=\E X_1<0$.
If the integrated tail distribution $F_I$ of $X_1$ is subexponential, then
\begin{eqnarray*}
\P \bigl\{\max_{u>0}X_u>x\bigr\}
&\sim& \frac{1}{|a|} \int_x^\infty \overline F(v)dv
\quad\mbox{ as }x\to\infty.
\end{eqnarray*}
If the distribution $F$ of $X_1$ is strong subexponential,
then, uniformly for all $t>0$,
\begin{eqnarray*}
\P \bigl\{\max_{u\in[0,t]}X_u>x\bigr\}
&\sim& \frac{1}{|a|} \int_x^{x+t|a|}\overline F(v)dv
\quad\mbox{ as }x\to\infty.
\end{eqnarray*}
\end{Theorem}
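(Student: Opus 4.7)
The plan is to reduce Theorem~\ref{thm:Levy.gen} to the compound--renewal results of Theorems~\ref{thm:renewal.linear} and~\ref{thm:renewal.linear.infty} via the L\'evy--It\^o decomposition. Fix a truncation level $h>0$ and a small parameter $\delta\in(0,|a|)$, and write
$$X_t=Z_t+R_t,\qquad Z_t=c_h t+J_t^{(h)},$$
where $J^{(h)}$ is the compound Poisson process of positive jumps of $X$ of size exceeding $h$, and $R_t$ is the independent complementary L\'evy process whose positive jumps are bounded by $h$. Because $\int_h^\infty y\,\nu(dy)\to 0$ as $h\to\infty$ (by $\E X_1^+<\infty$), the linear drift $c_h$ can be chosen so that $\E Z_1=a+\delta$ and hence $\E R_1=-\delta$; for $h$ sufficiently large this forces $c_h<0$, placing us in case~(i) of Theorem~\ref{thm:renewal.linear}.

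Since the positive jumps of $R$ are bounded by $h$ and $\E R_1=-\delta<0$, an exponential supermartingale / Chebyshev argument gives $\P\{\sup_{u>0}R_u>L\}\le e^{-\gamma L}$ for some $\gamma=\gamma(h,\delta)>0$, uniformly in $t$, so in particular $\P\{\sup R>L\}=o(\overline F(L))$. The conditional big-jump distribution $B_h$ of $J^{(h)}$ inherits strong subexponentiality from $F$ through the classical tail equivalence $\lambda_h\overline B_h(v)\sim\overline F(v)$ with $\lambda_h=\nu((h,\infty))$. Theorem~\ref{thm:renewal.linear} then yields, uniformly in $t>0$,
$$\P\{M_t^Z>x\}\ \sim\ \frac{1}{|a+\delta|}\int_x^{x+|a+\delta|t}\overline F(v)\,dv.$$

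The upper bound for $M_t^X$ follows from the pointwise inequality $M_t^X\le M_t^Z+\sup_{u\le t}R_u$ together with the standard subexponential-plus-light-tail convolution estimate, giving $\P\{M_t^X>x\}\le(1+o(1))\,\P\{M_t^Z>x\}$. For the matching lower bound, apply the principle of a single big jump of Theorem~\ref{single.big.jump.renewal} to $Z$: on an event of probability essentially $\P\{M_t^Z>x\}$ a single jump of $J^{(h)}$ at some time $T_k\le t$ is so large that $Z_{T_k}$ exceeds $x$ by a buffer, and, independently, the SLLN-type estimate for $R$ guarantees that $R_{T_k}$ stays within a buffer of its mean, so that $X_{T_k}=Z_{T_k}+R_{T_k}>x$. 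Finally, let $\delta\downarrow 0$ (and $h\to\infty$) in these two-sided bounds; continuity of $\delta\mapsto\frac{1}{|a+\delta|}\int_x^{x+|a+\delta|t}\overline F(v)\,dv$ in $\delta$, together with long-tailedness of $F$, yields the claimed asymptotic with $|a|$ in place of $|a+\delta|$. The infinite-horizon statement is obtained in exactly the same way, now invoking Theorem~\ref{thm:renewal.linear.infty} for $M_\infty^Z$.

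The main obstacle is that the limits $x\to\infty$ (uniform in $t$) and $\delta\downarrow 0$ cannot be taken independently: the exponential constant $\gamma(h,\delta)$ controlling $\sup R$ degrades as $\delta\to 0$, so $\delta$ must be allowed to depend on $x$ in a controlled way while the subexponential--sum estimate comparing $\P\{M_t^Z>x\}$ and $\P\{M_t^Z+\sup R>x\}$ has to remain valid along this coupling. A related subtlety is preserving \emph{uniformity in $t$}, which requires the $o(\overline F(L))$ contribution from $R$ to stay negligible against $\int_x^{x+|a|t}\overline F(v)\,dv/|a|$ for all $t>0$, including the regime where $t$ is large relative to $x$ and the denominator of the target quantity is governed by the integrated tail rather than by a single-step contribution.
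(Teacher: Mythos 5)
Your plan follows the same route as the paper's proof: L\'evy--It\^o decomposition into a compound-Poisson big-jump part plus an independent light-tailed remainder, reduction of the former to Theorem~\ref{thm:renewal.linear}, control of the latter, and a final drift perturbation shrunk to zero. But the step ``$\P\{M_t^X>x\}\le(1+o(1))\,\P\{M_t^Z>x\}$ by the standard subexponential-plus-light-tail convolution estimate'' has a genuine gap once uniformity in $t$ is required, and the obstacle you flag points to the wrong regime. The dangerous case is \emph{small} $t$, not large $t$: as $t\downarrow 0$, Theorem~\ref{thm:renewal.linear} gives $\P\{M_t^Z>x\}$ of order $t\,\overline F(x)$, which vanishes, while your Cram\'er bound $\P\{\sup_{u\le t}R_u>L\}\le e^{-\gamma L}$ is simply the $t=\infty$ bound and does not improve as $t\to 0$. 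Consequently the contribution of the event $\{\sup_{u\le t}R_u\ge h(x)\}$ cannot be $o\bigl(\P\{M_t^Z>x\}\bigr)$ uniformly over small $t$, and the convolution step breaks down. The paper closes exactly this gap by proving a $t$-dependent bound for the light-tailed part: via Doob's inequality and the estimate \eqref{X12.exp.bound} on $\E e^{sX^{(2)}_t}$ it shows $\P\{\sup_{u\le t}Z^\varepsilon_u>x\}\le c_2\,t\,e^{-x}$ for $t\le e^{-1}$ (equation \eqref{Z.eps.e}); the extra factor of $t$ matches the small-$t$ order of $\P\{M_t^Z>x\}$. This is then fed into the three-term decomposition $P_1+P_2+P_3$ of \eqref{P1-P3}, with the cross term $P_3$ handled via the $\mathcal{S}^*$ property.

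Two further points. First, the paper does not send the truncation level $h\to\infty$; it fixes the cut-off at $1$ (the choice is immaterial to the asymptotic on either side) and lets only $\varepsilon\downarrow0$, and only at the very end, after establishing for each fixed $\varepsilon$ a uniform-in-$t$ bound whose leading constant is $1/|a+\varepsilon|$. Since the $o(1)$ there may depend on $\varepsilon$, sending $\varepsilon\downarrow0$ is a free observation about constants, not a coupled limit with $x$; this dissolves the ``main obstacle'' you describe. Second, your drift assignment gives $\E R_1=-\delta<0$, so $\inf_{u\ge 0}R_u=-\infty$ almost surely, which means your lower bound must compensate explicitly for the unbounded downward excursions of $R$; the quoted application of Theorem~\ref{single.big.jump.renewal} to $Z$ alone does not do that. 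The paper's lower bound instead gives the light-tailed remainder a small \emph{positive} drift ($Z^\varepsilon_t:=Z_t-t\E Z_1+t\varepsilon$), so that $\inf_{u\ge0}Z^\varepsilon_u$ is a.s.\ finite, and the lower bound then follows directly from Theorem~\ref{thm:renewal.linear} applied to $X^\varepsilon_t:=X^{(3)}_t+t\E Z_1-t\varepsilon$, without invoking the single-big-jump theorem at all.
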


It has been suggested by Asmussen and Kl\"uppelberg (1996) 
and by Asmussen (1998) to follow a discrete skeleton argument 
in order to prove this asymptotics for $t=\infty$ 
when the tail of the L\'evy measure is subexponential; 
notice that this approach requires additional considerations 
which take into account fluctuations of L\'evy processes within time slots;
see the remark after Theorem \ref{thm:Levy.1}. 

In Braverman et al. (2002) tail asymptotics are presented for
some subclass of subadditive functionals of L\'evy process
with regularly varying at infinity L\'evy measure.
The overall supremum is a particular example considered in that article.

In Kl\"uppelberg et al. (2004, Theorem 6.2), tail asymptotics for the overall
supremum of negatively driven L\'evy process are derived via
direct approach based on ladder properties of the L\'evy process.

In Doney et al. (2016) the passage time problem is considered
for L\'evy processes, emphasising heavy tailed cases;
local and functional versions of limit distributions are derived 
for the passage time itself, as well as for the position 
of the process just prior to passage, and the overshoot of a high level
which is an extension for L\'evy processes of corresponding results 
for random walks, see e.g. Foss et al. (2013, Theorem 5.24).

In Foss et al. (2007, Theorem 3.1), Markov modulated L\'evy process 
is studied and again the tail asymptotics for the overall supremum 
were proven, via reduction to Markov modulated random walk.

In the book by Borovkovs (2008, Ch. 15) some partial results
on $\max_{u\in[0,t]}X_u$ are formulated 
(see, for example, Theorems 15.2.2(vi) and 15.3.12 there) 
under some specific conditions on the distribution of $X_1$; 
the supporting arguments provided may be hardly considered 
as clear and comprehensive. 
For example, on page 525 the authors justify transition from
integer $t$ to non-integer $t$ by convergence in probability 
$X_u\to 0$ as $u\to\infty$ which is clearly insufficient.
Also notice that it was not proven there that the corresponding
asymptotics hold uniformly for all $t>0$.

Related results on sample-path large deviations of scaled 
L\'evy processes $X(nt)/n$ with regularly varying L\'evy measure 
are proven by Rhee et al. (2016).

The following result is due to Willekens \cite{W};
it was proven via natural elementary rather short arguments. 

\begin{Theorem}\label{thm:Levy.1}
Let $X_t$ be a L\'evy process. For any fixed $t>0$,
the following assertions are equivalent:

{\rm(i)} the distribution of $X_t$ is long-tailed;

{\rm(ii)} the distribution of $M_t$ is long-tailed.

Each of {\rm(i)} and {\rm(ii)} implies
\begin{eqnarray}\label{fixed.t}
\P \{M_t>x\} &\sim& \P \{X_t>x\} \quad\mbox{ as }x\to\infty.
\end{eqnarray}
\end{Theorem}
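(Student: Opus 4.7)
The plan is to apply the strong Markov property at the first-passage time above level $x$ and to extract both implications, together with the asymptotic relation \eqref{fixed.t}, from a single quantitative estimate.

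The inequality $\P\{M_t>x\}\ge\P\{X_t>x\}$ is immediate. For the reverse direction I would introduce $\tau:=\inf\{s\ge 0:X_s>x\}$, so that $\{\tau\le t\}=\{M_t>x\}$, and note that right-continuity together with the definition of $\tau$ as an infimum forces $X_\tau\ge x$ on this event. For any fixed $A>0$ I split
\begin{eqnarray*}
\P\{M_t>x\} &=& \P\{M_t>x,\,X_t>x-A\}+\P\{M_t>x,\,X_t\le x-A\}.
\end{eqnarray*}
For the second term, the strong Markov property at $\tau$ says that $X_t-X_\tau$ is distributed as $X_{t-\tau}$ and independent of the $\sigma$-algebra generated by the process up to time $\tau$. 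On the event in question this increment is at most $x-A-X_\tau\le -A$, so
\begin{eqnarray*}
\P\{M_t>x,\,X_t\le x-A\}
&\le& \P\{M_t>x\}\cdot\sup_{s\in[0,t]}\P\{X_s\le -A\}\\
&\le& \P\{M_t>x\}\cdot\delta(A),
\end{eqnarray*}
where $\delta(A):=\P\{\inf_{s\le t}X_s\le -A\}\to 0$ as $A\to\infty$, since the infimum of a c\`adl\`ag path over a compact interval is finite almost surely. Rearranging gives the core estimate
\begin{eqnarray*}
(1-\delta(A))\,\P\{M_t>x\} &\le& \P\{X_t>x-A\}.
\end{eqnarray*}

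Both implications then fall out by combining this with the trivial upper bound $\P\{X_t>y\}\le\P\{M_t>y\}$. Assuming (i), long-tailedness of $X_t$ gives $\P\{X_t>x-A\}\sim\P\{X_t>x\}$, so $\P\{M_t>x\}\le\frac{1+o(1)}{1-\delta(A)}\P\{X_t>x\}$; letting $A\to\infty$ yields $\P\{M_t>x\}\sim\P\{X_t>x\}$, which gives both (ii) and \eqref{fixed.t}. Conversely, assuming (ii), long-tailedness of $M_t$ gives $\P\{M_t>x-A\}\sim\P\{M_t>x\}$; substituting $y=x-A$ in the core estimate and using this asymptotic, I get $\P\{X_t>y\}\ge(1-\delta(A)+o(1))\P\{M_t>y\}$, and sandwiching with the trivial upper bound and letting $A\to\infty$ shows $\P\{X_t>y\}\sim\P\{M_t>y\}$, whence (i).

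The only subtle point is the strong Markov step itself: one needs to check that $\tau$ is a stopping time with respect to the usual augmentation of the natural filtration of $X$, that $X_\tau\ge x$ on $\{\tau\le t\}$ (from right-continuity and the infimum definition), and that the post-$\tau$ process is an independent copy of $X$ started afresh. Once these standard Lévy-process facts are in hand, everything else reduces to routine manipulation with long tails.
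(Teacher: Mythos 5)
The paper does not prove Theorem~\ref{thm:Levy.1} at all; it states the result and attributes it to Willekens (1987), noting only that it ``was proven via natural elementary rather short arguments.'' Your argument is correct and is exactly in the spirit of that reference: stop the process at the first passage time over level $x$, apply the strong Markov property, and bound the post-passage drop by the law of the running infimum on $[0,t]$. The core estimate
$(1-\delta(A))\P\{M_t>x\}\le\P\{X_t>x-A\}$
combined with the trivial bound $\P\{X_t>x\}\le\P\{M_t>x\}$ does indeed deliver both implications and the asymptotic \eqref{fixed.t} at once; the two directions fall out by applying long-tailedness to the left or to the right member of the sandwich.

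One small technical point to tidy up. Your assertion that $\{\tau\le t\}=\{M_t>x\}$ is not quite exact: one always has $\{M_t>x\}\subseteq\{\tau\le t\}\subseteq\{M_t\ge x\}$, and $X_\tau\ge x$ (rather than $>x$) on $\{\tau\le t\}$, so the two events can differ on $\{M_t=x\}$ when $M_t$ has an atom at $x$ (which can happen for compound Poisson processes with atomic jump law). Consequently the intermediate bound
$\P\{M_t>x,\,X_t\le x-A\}\le\delta(A)\,\P\{M_t>x\}$
is not justified as written; what the strong Markov property gives directly is
$\P\{\tau\le t,\,X_t-X_\tau\le -A\}\le\delta(A)\,\P\{\tau\le t\}$.
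The fix is simply to run the whole decomposition for the event $\{\tau\le t\}$ instead of $\{M_t>x\}$: from
$\P\{\tau\le t\}\le\P\{X_t>x-A\}+\delta(A)\,\P\{\tau\le t\}$
one gets $(1-\delta(A))\P\{\tau\le t\}\le\P\{X_t>x-A\}$, and then the containment $\{M_t>x\}\subseteq\{\tau\le t\}$ yields the same core estimate you wrote down. Everything after that goes through unchanged. The remaining ingredients---that $\tau$ is a stopping time for the usual augmentation, that $X_\tau\ge x$ by right-continuity, and that the running infimum over $[0,t]$ is a.s.\ finite so that $\delta(A)\to0$---are all standard, as you correctly flag.
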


Notice that Theorem \ref{thm:Levy.1} together with
Theorem 1 for regenerative processes from Palmowski and Zwart 
(2007)---or with Theorem 3.3 from Asmussen et al. (1999)---provides 
a correct version of skeleton approach for proving subexponential asymptotics 
for the overall supremum $M_\infty$ under negative drift assumption.

In our proof of Theorem \ref{thm:Levy.gen} we need the 
following lemma which may be of independent interest.

\begin{Lemma}\label{long.tailed.cri}
Let $G$ and $B$ be two distributions on $\R$ and let $G$ be light-tailed,
that is, there exist $\lambda>0$ and $c<\infty$ such that 
$\overline G(x)\le ce^{-\lambda x}$ for all $x$. 
Denote $F:=G*B$.

{\rm (i)} If $B$ is long-tailed then 
$\overline F(x)\sim\overline B(x)$ as $x\to\infty$;
in particular, $F$ is long-tailed too.

{\rm (ii)} If $F$ is long-tailed then $B$ is long-tailed too.
\end{Lemma}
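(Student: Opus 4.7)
The plan is to establish (i) by a dominated convergence argument on the ratio $\overline F/\overline B$, and (ii) by a two-sided comparison built on a simple convolution inequality.

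For part (i), I start from $\overline F(x)/\overline B(x) = \int \overline B(x-y)/\overline B(x)\, G(dy)$. Iterating the single-step estimate $\overline B(x-1) \le e^{\varepsilon}\overline B(x)$, valid for $x \ge x_0$ by long-tailedness, gives the standard bound $\overline B(x-y)/\overline B(x) \le e^{\varepsilon y}$ uniformly for $x \ge x_0$ and $0 \le y \le x - x_0$. Long-tailedness also forces heavy-tailedness, so $\overline B(x)\,e^{\lambda x} \to \infty$ and the tail piece $\int_{y > x - x_0} \overline B(x-y)\, G(dy) \le \overline G(x-x_0)$ is $o(\overline B(x))$. On the remaining range the integrand converges pointwise to $1$ by long-tailedness and is dominated by $\max(1, e^{\varepsilon y})$, which is $G$-integrable once $\varepsilon < \lambda$. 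Dominated convergence now yields $\overline F(x)/\overline B(x) \to 1$, and long-tailedness of $F$ follows immediately.

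For part (ii), the starting observation is the convolution-independence inequality
\[
G([-a,a])\cdot B((x, x+h]) \;\le\; F((x-a, x+h+a]),
\]
valid for any $a>0$ with $G([-a,a])>0$, coming from the inclusion $\{Y\in[-a,a],\,Z\in(x,x+h]\} \subseteq \{Y+Z\in(x-a,x+h+a]\}$ and independence of $Y\sim G$ and $Z\sim B$. Since $F$ is long-tailed, the right-hand side equals $\overline F(x-a) - \overline F(x+h+a) = o(\overline F(x))$, so that $\overline B(x) - \overline B(x+h) = o(\overline F(x))$. The companion inequality $\overline F(x) \ge G([-a,a])\,\overline B(x+a)$, combined with long-tailedness of $F$, gives the one-sided comparison $\overline B(x) \le (G([-a,a])^{-1}+o(1))\,\overline F(x)$, i.e.\ $\overline B = O(\overline F)$. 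Moreover, $F$ long-tailed implies $F$ heavy-tailed and, since $G$ is light-tailed, $B$ must be heavy-tailed as well, for otherwise $F = G\ast B$ would inherit a light tail.

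The main obstacle is upgrading ``little-$o$ of $\overline F$'' to ``little-$o$ of $\overline B$'', equivalently, establishing the matching bound $\overline F(x) = O(\overline B(x))$. For this I would use the crude upper estimate $\overline F(x) \le \overline B(x-K) + \overline G(K)$ with $K = K(x) \to \infty$ chosen so that $\overline G(K(x)) = o(\overline B(x))$, which is feasible because $B$ is heavy-tailed; the shift $\overline B(x-K(x))/\overline B(x)$ is then controlled by a self-improving iteration of the key inequality across bounded-length slices summing to $K(x)$, exploiting long-tailedness of $F$ at each step. Once $\overline F \asymp \overline B$ has been secured, the chain $\overline B(x) - \overline B(x+h) = o(\overline F(x)) = o(\overline B(x))$ delivers the long-tailedness of $B$.
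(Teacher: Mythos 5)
Your part (i) is correct and essentially the same as the paper's: both proofs evaluate the ratio $\overline F(x)/\overline B(x)=\int\overline B(x-y)/\overline B(x)\,G(dy)$ by dominated convergence, using the long-tailedness estimate $\overline B(x-y)\le c(\varepsilon)e^{\varepsilon y}\overline B(x)$ with $\varepsilon<\lambda$ to supply a $G$-integrable majorant. (The paper splits at a slowly growing level $h(x)$, you split at $x-x_0$ and dispatch the tail via $\overline G(x-x_0)=o(\overline B(x))$; either works.)

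Part (ii), however, has a genuine gap at the crucial step. You correctly set up the convolution inequalities, derive $\overline B(x)\le\bigl(G([-a,a])^{-1}+o(1)\bigr)\overline F(x)$, and observe that the slice estimate $\overline B(x)-\overline B(x+h)=o(\overline F(x))$ would finish the job once $\overline F(x)=O(\overline B(x))$ is known. But the route you propose for that last bound does not close. Your plan is to write $\overline F(x)\le\overline B(x-K(x))+\overline G(K(x))$ with $K(x)\to\infty$ chosen so that $\overline G(K(x))=o(\overline B(x))$, and then to control $\overline B(x-K(x))/\overline B(x)$ by iterating the slice inequality over $K(x)$ unit blocks. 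Telescoping that inequality (with $a=a_0$ fixed, $g_0:=G([-a_0,a_0])>0$) gives, for integer $m$,
\begin{equation*}
\overline B(x-m)-\overline B(x)\ \le\ g_0^{-1}(2a_0+1)\,\overline F(x-m-a_0),
\end{equation*}
and the right-hand side is not $O(\overline B(x))$ once $m=K(x)\to\infty$. For instance, take $\overline B(x)=\overline F(x)=e^{-x^{0.99}}$ (heavy-tailed, long-tailed, and compatible with everything established so far). The light-tail constraint on $G$ forces $K(x)\ge\lambda^{-1}x^{0.99}(1+o(1))$, and then
\begin{equation*}
\frac{\overline B(x-K(x))}{\overline B(x)}\ =\ \exp\bigl\{x^{0.99}-(x-K(x))^{0.99}\bigr\}\ \to\ \infty.
\end{equation*}
So the intermediate claim $\overline B(x-K(x))=O(\overline B(x))$ fails for the $K(x)$ you need; indeed, any uniform control of $\overline B$ over shifts of length $K(x)\to\infty$ is essentially the long-tailedness of $B$ you are trying to prove, so the ``self-improving iteration'' is circular. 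The paper sidesteps this entirely: it never truncates at a growing level. Instead it fixes $h_0$, splits $\overline F(x)=\int_{-\infty}^{h_0}\overline B(x-y)\,G(dy)+\int_{h_0}^{\infty}\overline B(x-y)\,G(dy)$, bounds the first term by $\overline B(x-h_0)$, bounds the second using the already-known $\overline B(\cdot)\le 2\overline F(\cdot-h_0)$ together with the part~(i) dominated-convergence estimate (giving $\sim 2\,\overline G(h_0)\,\overline F(x-h_0)$), and argues by contradiction: if $\liminf\overline B/\overline F<1$ along a subsequence, one obtains $1\le 1-\varepsilon+2\overline G(h_0)$ for every $h_0$, which is false for $h_0$ large. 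The point is that $h_0\to\infty$ is taken only after the $x$-limit, so no accumulated error over growing intervals ever appears.
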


Similar proposition was proven for subexponential distributions in
Embrechts et al. (1979, Proposition 1); our proof is similar.

\begin{proof}
(i) Assume that $B$ is long-tailed. Then there exists an increasing 
function $h(x)\to\infty$ such that (see Foss et al. (2011, Lemma 2.19)
\begin{eqnarray}\label{B.long.tailed-}
\overline B(x-h(x)) &\sim& \overline B(x) \quad\mbox{ as }x\to\infty.
\end{eqnarray}
Consider the following decomposition:
\begin{eqnarray*}
\frac{\overline F(x)}{\overline B(x)} 
&=& \int_{-\infty}^{h(x)} \frac{\overline B(x-y)}{\overline B(x)} G(dy)
+\int_{h(x)}^\infty \frac{\overline B(x-y)}{\overline B(x)} G(dy)\\
&=:& I_1(x)+I_2(x).
\end{eqnarray*}
Since
\begin{eqnarray*}
\frac{\overline B(x-y)}{\overline B(x)} 
&\le& \frac{\overline B(x-h(x))}{\overline B(x)}
\end{eqnarray*}
for all $y\le h(x)$, it follows from \eqref{B.long.tailed-} 
that the integrand in $I_1(x)$ possesses an integrable majorant.
Moreover, for every $y$, $\frac{\overline B(x-y)}{\overline B(x)}\to 1$
as $x\to\infty$. Hence, by the dominated convergence theorem,
\begin{eqnarray}\label{I1}
I_1(x) &\to& 1\quad\mbox{as }x\to\infty.
\end{eqnarray}
Further, since the distribution $B$ is long-tailed, for any $\varepsilon>0$
there exists $x(\varepsilon)$ such that
\begin{eqnarray*}
\overline B(x-1) &\le& \overline B(x)e^\varepsilon
\quad\mbox{for all }x\ge x(\varepsilon).
\end{eqnarray*}
Hence, there exists $c(\varepsilon)<\infty$ such that
\begin{eqnarray*}
\overline B(x-y) &\le& c(\varepsilon)\overline B(x)e^{\varepsilon y}
\quad\mbox{for all }x\ge x(\varepsilon),\ y>0.
\end{eqnarray*}
Take $\varepsilon<\lambda$. Then
\begin{eqnarray*}
I_2(x) &\le& c(\varepsilon)\int_{h(x)}^\infty e^{\varepsilon y}G(dy)
\to 0\quad\mbox{ as }x\to\infty,
\end{eqnarray*}
because $\overline G(x)=O(e^{-\lambda x})$ and $h(x)\to\infty$ as $x\to\infty$.
Together with \eqref{I1} it implies the relation 
$\overline F(x)\sim\overline B(x)$ as $x\to\infty$.

(ii) Assume that $F$ is long-tailed. Let us then prove that 
$\overline B(x)\sim\overline F(x)$ which implies long-tailedness of $B$.
Since $F$ is long-tailed, there exists a function $h(x)\to\infty$ 
such that $\overline F(x-h(x))\sim\overline F(x)$ as $x\to\infty$. 

For every $x$ and $h\in\R$ the following inequality holds:
$$
\overline F(x-h)=\overline{G*B}(x-h)\ge\overline G(-h)\overline B(x).
$$
If we choose $h_0$ satisfying $\overline G(-h_0)\ge 1/2$ then
\begin{eqnarray}\label{B.upper}
\overline B(x) &\le& 2\overline F(x-h_0)\quad\mbox{for all }x\in\R.
\end{eqnarray}
Also we deduce that
\begin{eqnarray*}
\overline B(x) &\le& \frac{\overline F(x-h(x))}{G(-h(x))}
\sim \overline F(x)\quad\mbox{as }x\to\infty.
\end{eqnarray*}
So, it remains to prove that
\begin{eqnarray}\label{B.lower}
\liminf_{x\to\infty}\frac{\overline B(x)}{\overline F(x)} &\ge& 1.
\end{eqnarray}
Suppose it does not hold. Then there exist an $\varepsilon>0$ 
and a sequence $x_n\to\infty$ such that
\begin{eqnarray}\label{assum.inf}
\overline B(x_n-h_0) &\le& (1-\varepsilon)\overline F(x_n-h_0)
\quad\mbox{for all }n\ge 1.
\end{eqnarray}
We have
\begin{eqnarray*}
\overline F(x_n) &=& \int_{-\infty}^{h_0} \overline B(x_n-y)G(dy)
+\int_{h_0}^\infty \overline B(x_n-y)G(dy)\\
&\le& \overline B(x_n-h_0)+\int_{h_0}^\infty \overline B(x_n-y)G(dy)\\
&\le& (1-\varepsilon)\overline F(x_n-h_0)
+2\int_{h_0}^\infty \overline F(x_n-h_0-y)G(dy),
\end{eqnarray*}
by \eqref{assum.inf} and \eqref{B.upper}. Since the distribution
$F$ is assumed to be long-tailed, the calculations of part (i) show that
\begin{eqnarray*}
\int_{h_0}^\infty \overline F(x_n-h_0-y)G(dy)
&\sim& \overline F(x_n-h_0)\overline G(h_0)\quad\mbox{as }n\to\infty.
\end{eqnarray*}
Therefore, for every $h_0$ satisfying $\overline G(-h_0)\ge 1/2$,
\begin{eqnarray*}
1=\lim_{n\to\infty}\frac{\overline F(x_n)}{\overline F(x_n-h_0)} 
&\le& 1-\varepsilon+2\overline G(h_0).
\end{eqnarray*}
Letting $h_0\to\infty$ leads to the contradiction $1\le 1-\varepsilon$. 
This justifies \eqref{B.lower} and the proof is complete.
\end{proof}

Given $X_1$ has infinitely divisible distribution, 
recall the L\'evy--Khintchine formula for the characteristic exponent 
$\Psi(\theta):=\log\E e^{i\theta X_1}$, for every $\theta\in\R$,
\begin{eqnarray*}
\Psi(\theta) &=& \Bigl(i\alpha\theta-\frac{1}{2}\sigma^2\theta^2\Bigr)
+\int_{0<|x|<1}(e^{i\theta x}-1-i\theta x)\Pi(dx)
+\int_{|x|\ge 1}(e^{i\theta x}-1)\Pi(dx)\\
&=:& \Psi_1(\theta)+\Psi_2(\theta)+\Psi_3(\theta);
\end{eqnarray*}
see, e.g. Kyprianou (2006, Sect. 2.1). 
Here $\Pi$ is the L\'evy measure concentrated on $\R\setminus\{0\}$
and satisfying $\int_\R(1\wedge x^2)\Pi(dx)<\infty$.
Let $X^{(1)}_t$, $X^{(2)}_t$ and $X^{(3)}_t$ be independent
processes given in the L\'evy--It\^o decomposition
$X_t\stackrel{d}{=} X^{(1)}_t+X^{(2)}_t+X^{(3)}_t$, where $X^{(1)}_t$ is 
a linear Brownian motion with characteristic exponent given by $\Psi^{(1)}$, 
$X^{(2)}_t$ is a square integrable martingale 
with an almost surely countable number of jumps on each finite time 
interval which are of magnitude less than unity and with
characteristic exponent given by $\Psi^{(2)}$
and $X^{(3)}_t$ is a compound Poisson process with intensity 
$\Pi(\R\setminus(-1,1))$ and jump distribution 
$\frac{\Pi(dx)}{\Pi(\R\setminus(-1,1))}$ concentrated on
$(-\infty,-1)\cup(1,\infty)$. It is known---see, 
e.g. Kyprianou (2006, Theorem 3.6) or Sato (1999, Theorem 25.17)---that the sum 
$Z_t:=X^{(1)}_t+X^{(2)}_t$ possesses all exponential moments finite,
\begin{eqnarray}\label{X12.exp}
\E e^{sZ_t}=\E e^{s(X^{(1)}_t+X^{(2)}_t)} &<& \infty 
\quad\mbox{for all }s\in\R.
\end{eqnarray}
In particular, exponential moments of $X^{(2)}_t$ may be bounded as follows. 
By the condition $\int_{(-1,1)}x^2\Pi(dx)<\infty$ 
we may produce the following upper bound:
\begin{eqnarray*}
\int_{(-1,1)}(e^{sx}-1-sx)\Pi(dx) &=& 
\int_{(-1,1)}\sum_{k=2}^\infty\frac{(sx)^k}{k!}\Pi(dx)\\
&\le& \sum_{k=2}^\infty\frac{s^k}{k!}\int_{(-1,1)}x^2\Pi(dx)\\
&=& c(e^s-1-s), 
\end{eqnarray*}
where $c:=\int_{(-1,1)}x^2\Pi(dx)$. Therefore,
\begin{eqnarray}\label{X12.exp.bound}
\E e^{sX^{(2)}_t} &=& e^{t\int_{(-1,1)}(e^{sx}-1-sx)\Pi(dx)} \le e^{cte^s}.
\end{eqnarray}

The property \eqref{X12.exp} allows to prove the following corollary 
from Lemma \ref{long.tailed.cri}.

\begin{Corollary}\label{cor:X.via.Pi}
{\rm(i)} The distribution of $X_1$ is long-tailed if and only if
the distribution of $X^{(3)}_1$ is so. In both cases,
$\P\{X_1>x\}\sim\P\{X^{(3)}_1>x\}$ as $x\to\infty$.

{\rm(ii)} The distribution of $X_1^+$ is strong subexponential if and only if
the distribution $\frac{\Pi(dx)}{\Pi(1,\infty)}$ 
concentrated on $(1,\infty)$ is so. In both cases,
$\P\{X_1>x\}\sim\Pi(x,\infty)$ as $x\to\infty$.
\end{Corollary}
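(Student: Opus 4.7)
The plan is to reduce everything to the compound Poisson component by peeling off the light-tailed part of the L\'evy--It\^o decomposition. From \eqref{X12.exp}, $Z_1:=X^{(1)}_1+X^{(2)}_1$ has all positive exponential moments finite, so by Chebyshev's inequality its distribution $G$ satisfies $\overline G(x)=O(e^{-\lambda x})$ for every $\lambda>0$. Since $X_1=Z_1+X^{(3)}_1$ is a sum of independent variables, the law of $X_1$ equals the convolution $G*B$ where $B$ is the law of $X^{(3)}_1$, so part (i) will follow by a direct application of Lemma~\ref{long.tailed.cri}: its part (i) gives ``$B$ long-tailed $\Rightarrow$ $F_{X_1}$ long-tailed and $\overline F\sim\overline B$'', and its part (ii) supplies the reverse implication.

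For part (ii) I would further split $X^{(3)}_1=X^{(3,+)}_1+X^{(3,-)}_1$ into two independent compound Poisson parts carrying the jumps of magnitude $>1$ on the positive and negative half-lines respectively, so that $X^{(3,+)}_1\ge 0$ has intensity $\lambda^+:=\Pi(1,\infty)$ and jump distribution $B^+:=\Pi(\cdot)/\lambda^+$ on $(1,\infty)$, while $X^{(3,-)}_1\le 0$ almost surely. Then $W_1:=Z_1+X^{(3,-)}_1$ is still right-light-tailed because for every $s>0$,
\begin{eqnarray*}
\E e^{sW_1} &=& \E e^{sZ_1}\cdot\E e^{sX^{(3,-)}_1} \ \le\ \E e^{sZ_1} \ <\ \infty,
\end{eqnarray*}
so a second application of Lemma~\ref{long.tailed.cri} to $F_{X_1}=F_{W_1}*F_{X^{(3,+)}_1}$ will reduce part (ii) to the positive compound Poisson $X^{(3,+)}_1$: one has $\overline F(x)\sim\P\{X^{(3,+)}_1>x\}$ whenever either side is long-tailed.

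Finally I would invoke the standard dictionary between a compound Poisson sum and its jump distribution to move between $X^{(3,+)}_1$ and $B^+$. Since a Poisson variable has every exponential moment, Foss et al.\ (2013, Theorem~3.37) yields $\P\{X^{(3,+)}_1>x\}\sim\lambda^+\overline{B^+}(x)=\Pi(x,\infty)$ whenever $B^+$ is subexponential, and conversely $B^+$ must be subexponential if $X^{(3,+)}_1$ is. Under $B^+\in{\mathcal S}^*$ this gives $\overline F(x)\sim\lambda^+\overline{B^+}(x)\sim\Pi(x,\infty)$, and the strong-subexponential conclusion $F_{X_1^+}\in{\mathcal S}^*$ then follows by closure of ${\mathcal S}^*$ under tail-equivalence up to a positive constant; the converse direction is symmetric, starting from $F_{X_1^+}\in{\mathcal S}^*\subset{\mathcal S}$ and chaining back through the two reductions above. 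The main technical obstacle will be this closure of ${\mathcal S}^*$ under tail-equivalence up to a multiplicative constant; I would either cite it from Foss et al.\ (2013) or verify it directly by splitting $\int_0^x\overline{F_1}(x-y)\overline{F_1}(y)\,dy$ into boundary and middle ranges with a slowly growing cut-off $h(x)$, using long-tailedness of $F_1$ on the boundary pieces and the uniform comparison $\overline{F_1}(u)/\overline{F_2}(u)\to c$ on the middle.
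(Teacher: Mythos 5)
Your proof is correct and follows essentially the same route as the paper: strip off the exponentially light component via Lemma~\ref{long.tailed.cri}, reduce the tail of $X_1$ to that of the positive-jump compound Poisson part, invoke the compound-Poisson tail equivalence $\P\{X^{(3,+)}_1>x\}\sim\Pi(x,\infty)$, and transfer strong subexponentiality in both directions by closure of ${\mathcal S}^*$ under tail equivalence up to a positive constant (Foss et al.\ 2013, Corollary~3.26). In fact your explicit second split $X^{(3)}_1=X^{(3,+)}_1+X^{(3,-)}_1$, with $W_1:=Z_1+X^{(3,-)}_1$ right-light-tailed, is tidier than the paper's wording, which informally treats $X^{(3)+}_1=\max(X^{(3)}_1,0)$ as if it were itself a compound Poisson random variable; the only small caveat is that the converse direction ``$X^{(3,+)}_1$ subexponential $\Rightarrow B^+$ subexponential'' is not Theorem~3.37 of Foss et al.\ but the Embrechts--Goldie--Veraverbeke equivalence for compound Poisson sums.
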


\begin{proof}
The assertion (i) is immediate from Lemma \ref{long.tailed.cri}.

(ii) If $X_1^+$ has strong subexponential distribution, then it is particularly
long-tailed, so that $\P\{X_1>x\}\sim\P\{X^{(3)}_1>x\}$ as $x\to\infty$.
Hence, the distribution of $X^{(3)+}_1$ is strong subexponential too,
and in particular subexponential. Since $X^{(3)+}_1$ has compound 
Poisson distribution with parameter $\Pi(1,\infty)$ and jump distribution 
$\frac{\Pi(dx)}{\Pi(1,\infty)}$ concentrated on $(1,\infty)$,
Theorem 3 of Foss et al. (2013) yields that $\P\{X_1>x\}\sim\Pi(x,\infty)$ as $x\to\infty$.
Therefore, the distribution $\frac{\Pi(dx)}{\Pi(1,\infty)}$ concentrated on 
$(1,\infty)$ is strong subexponential---see, 
e.g. Foss et al. (2013, Corollary 3.26).

If the distribution $\frac{\Pi(dx)}{\Pi(1,\infty)}$ concentrated on 
$(1,\infty)$ is strong subexponential, then 
$\P\{X_1>x\}\sim\P\{X^{(3)+}_1>x\}\sim\Pi(x,\infty)$ by the theorem 
on tail behavior for random sums---see e.g. Foss et al. (2013, Theorem 3.37).
\end{proof}

\begin{proof}[Proof of Theorem \ref{thm:Levy.gen}.]
We start with a lower bound. We have $a=\E X^{(3)}_1+\E Z_1$. 
Fix $\varepsilon>0$ and consider two independent processes
$$
X^\varepsilon_t:=X^{(3)}_t+t\E Z_1-t\varepsilon
\quad\mbox{ and }\quad
Z^\varepsilon_t:=Z_t-t\E Z_1+t\varepsilon,
$$
so that $X_t=X^\varepsilon_t+Z^\varepsilon_t$. Then
\begin{eqnarray*}
\max_{u\in[0,t]}X_u &\ge& \max_{u\in[0,t]}X^\varepsilon_u
+\inf_{u\ge 0}Z^\varepsilon_u.
\end{eqnarray*}
Therefore, for any $x$ and $y>0$,
\begin{eqnarray*}
\P\bigl\{\max_{u\in[0,t]}X_u>x\bigr\} &\ge& 
\P\bigl\{\max_{u\in[0,t]}X^\varepsilon_u>x+y\bigr\}
\P\bigl\{\inf_{u\ge 0}Z^\varepsilon_u>-y\bigr\}.
\end{eqnarray*}
The process $Z^\varepsilon_t$ is positively driven, 
because $\E Z^\varepsilon_t=t\varepsilon>0$.
This yields that the overall minimum of the process 
$Z^\varepsilon_t$ is finite with probability $1$. 
In particular, there exists an $y_0>0$ such that
\begin{eqnarray*}
\P\bigl\{\inf_{u\ge 0}Z^\varepsilon_u>-y_0\bigr\} &\ge& 1-\varepsilon,
\end{eqnarray*}
which implies, for all $t>0$,
\begin{eqnarray}\label{deco.eps}
\P\bigl\{\max_{u\in[0,t]}X_u>x\bigr\} &\ge& (1-\varepsilon)
\P\bigl\{\max_{u\in[0,t]}X^\varepsilon_u>x+y_0\bigr\}.
\end{eqnarray}
Since $X_1^+$ is assumed to be strong subexponential,
by Corollary \ref{cor:X.via.Pi} the distribution 
$\frac{\Pi(dx)}{\Pi(1,\infty)}$ concentrated on $(1,\infty)$ is
strong subexponential too and
\begin{eqnarray*}
\overline\Pi(x) &\sim& \P\{X_1>x\}=\overline F(x)\quad\mbox{as }x\to\infty. 
\end{eqnarray*}
Then the compound Poisson process $X^\varepsilon_t$ 
with drift $(a-\varepsilon)t$ satisfies all the conditions of 
Theorem \ref{thm:renewal.linear} with $\tau$'s 
exponentially distributed which implies
\begin{eqnarray*}
\P\bigl\{\max_{u\in[0,t]}X^\varepsilon_u>x\bigr\}
&\sim& \frac{1}{|a-\varepsilon|} \int_x^{x+t|a-\varepsilon|}\overline F(v)dv
\end{eqnarray*}
as $x\to\infty$ uniformly for all $t>0$. Taking into account that
\begin{eqnarray*}
\int_x^{x+t|a-\varepsilon|}\overline F(v)dv
&\ge& \int_x^{x+t|a|}\overline F(v)dv
\end{eqnarray*}
and letting $\varepsilon\downarrow 0$,
we conclude from \eqref{deco.eps} the lower bound
\begin{eqnarray}\label{lower.Levy}
\P\bigl\{\max_{u\in[0,t]}X_u>x\bigr\}
&\ge& \frac{1+o(1)}{|a|} \int_x^{x+t|a|}\overline F(v)dv
\quad\mbox{as }x\to\infty.
\end{eqnarray}

Now proceed to prove an upper bound. Consider two independent processes
$$
X^\varepsilon_t:=X^{(3)}_t+t\E Z_1+t\varepsilon
\quad\mbox{ and }\quad
Z^\varepsilon_t:=Z_t-t\E Z_1-t\varepsilon,
$$
so that $X_t=X^\varepsilon_t+Z^\varepsilon_t$. Then
\begin{eqnarray}\label{X.Z.eps}
\max_{u\in[0,t]}X_u &\le& \max_{u\in[0,t]}X^\varepsilon_u
+\max_{u\in[0,t]}Z^\varepsilon_u.
\end{eqnarray}
Here the process $Z^\varepsilon_t$ is negatively driven, 
$\E Z^\varepsilon_t=-t\varepsilon<0$.
This yields that the overall supremum of the process 
$Z^\varepsilon_t$ is finite with probability $1$. 
Since all positive exponential moments of $Z^\varepsilon_1$ 
are finite, there exists a $\beta=\beta(\varepsilon)>0$ such that
$\E e^{\beta Z^\varepsilon_1}=1$. Then, in particular,
the Cram\'er estimate says that (see also Bertoin and Doney (1994))
\begin{eqnarray}\label{Z.eps.Cra}
\P\bigl\{\sup_{u\ge 0}Z^\varepsilon_u>x\bigr\} &\le& e^{-\beta x}.
\end{eqnarray}
We also need more accurate upper bound for 
$\P\bigl\{\sup_{u\in[0,t]}Z^\varepsilon_u>x\bigr\}$ for small values of $t$.
Notice that, for all $s>0$, the process $e^{s(Z_t-\E Z_t)}$ is a
positive submartingale, so Doob's inequality is applicable
\begin{eqnarray*}
\P\bigl\{\sup_{u\in[0,t]}Z^\varepsilon_u>x\bigr\} &\le& 
\P\bigl\{\sup_{u\in[0,t]}(Z_u-\E Z_u)>x\bigr\}\\
&\le& e^{-sx}\E e^{s(Z_t-\E Z_t)}\\ 
&=& e^{-sx}e^{s^2t\sigma^2/2}\E e^{sX^{(2)}_t}.
\end{eqnarray*}
Recalling the upper bound \eqref{X12.exp.bound} 
for $\E e^{sX^{(2)}_t}$, we get
\begin{eqnarray*}
\P\bigl\{\sup_{u\in[0,t]}Z^\varepsilon_u>x\bigr\} &\le& 
e^{-sx}e^{(s^2\sigma^2/2+ce^s)t}.
\end{eqnarray*}
For $t\le 1$, take $s:=\log\frac{1}{t}$, then 
\begin{eqnarray*}
\P\bigl\{\sup_{u\in[0,t]}Z^\varepsilon_u>x\bigr\} &\le& c_1e^{-sx}=c_1t^x.
\end{eqnarray*}
If $t\le e^{-1}$, then we finally deduce
\begin{eqnarray}\label{Z.eps.e}
\P\bigl\{\sup_{u\in[0,t]}Z^\varepsilon_u>x\bigr\} &\le& 
c_1t t^{x-1} \le ct e^{1-x}=c_2te^{-x}.
\end{eqnarray}

Since $X_1$ is assumed to be strong subexponential,
by Corollary \ref{cor:X.via.Pi} the distribution 
$\frac{\Pi(dx)}{\Pi(1,\infty)}$ concentrated on $(1,\infty)$ is
strong subexponential too. Then the compound Poisson process 
$X^\varepsilon_t$ with drift $(a+\varepsilon)t$ satisfies all the conditions 
of Theorem \ref{thm:renewal.linear} with $\tau$'s 
exponentially distributed and we have the following asymptotics
\begin{eqnarray}\label{asyX+}
\P\bigl\{\max_{u\in[0,t]}X^\varepsilon_u>x\bigr\}
&\sim& \frac{1}{|a+\varepsilon|} \int_x^{x+t|a+\varepsilon|}\overline F(v)dv\nonumber\\
&\le& \frac{1}{|a+\varepsilon|} \int_x^{x+t|a|}\overline F(v)dv
\end{eqnarray}
as $x\to\infty$ uniformly for all $t>0$. 
As follows from \eqref{Z.eps.Cra} and \eqref{Z.eps.e}, uniformly for all $t>0$,
\begin{eqnarray}\label{Z=oX}
\P\bigl\{\sup_{u\in[0,t]}Z^\varepsilon_u>x\bigr\}
&=& o\Bigl(\P\bigl\{\max_{u\in[0,t]}X^\varepsilon_u>x\bigr\}\Bigr)
\quad\mbox{as }x\to\infty.
\end{eqnarray}

Take any function $h(x)\to\infty$ such that 
$\overline F(x-h(x))\sim\overline F(x)$ as $x\to\infty$
and consider the following upper bound
\begin{eqnarray}\label{P1-P3}
\P\bigl\{\max_{u\in[0,t]}X_u>x\bigr\} &\le& 
\P\bigl\{\max_{u\in[0,t]}X^\varepsilon_u>x-h(x)\bigr\}
+\P\bigl\{\max_{u\in[0,t]}Z^\varepsilon_u>x-h(x)\bigr\}\nonumber\\
&& + \P\bigl\{\max_{u\in[0,t]}X^\varepsilon_u+\sup_{u\in[0,t]}Z^\varepsilon_u>x,\
h(x)\le \sup_{u\in[0,t]}Z^\varepsilon_u\le x-h(x)\bigr\}\nonumber\\
&:=& P_1+P_2+P_3.
\end{eqnarray}
Here the first probability $P_1$ on the right may be estimated as follows: 
by \eqref{asyX+},
\begin{eqnarray}\label{P1}
P_1 &\le& \frac{1+o(1)}{|a+\varepsilon|} \int_0^{t|a|}\overline F(x-h(x)+v)dv\nonumber\\
&\sim& \frac{1}{|a+\varepsilon|} \int_0^{t|a|}\overline F(x+v)dv  
\end{eqnarray}
By \eqref{Z=oX} and \eqref{P1}, 
\begin{eqnarray}\label{P2}
P_2 &=& o\Bigl(\P\bigl\{\max_{u\in[0,t]}X^\varepsilon_u>x-h(x)\bigr\}\Bigr)
=o\Bigl(\int_0^{t|a|}\overline F(x+v)dv\Bigr)
\quad\mbox{as }x\to\infty.
\end{eqnarray}
The probability $P_3$ is not greater than
\begin{eqnarray*}
\lefteqn{\int_{h(x)}^{x-h(x)}\P\bigl\{\sup_{u\in[0,t]}X^\varepsilon_u>x-y\bigl\}
\P\bigl\{\sup_{u\in[0,t]}Z^\varepsilon_u\in dy\bigr\}}\\
&&\hspace{25mm} \le \sum_{n=h(x)+1}^{x-h(x)}\P\bigl\{\sup_{u\in[0,t]}X^\varepsilon_u>x-n\bigl\}
\P\bigl\{\sup_{u\in[0,t]}Z^\varepsilon_u\in[n-1,n]\bigr\}\\
&&\hspace{60mm} \le c_1\sum_{n=h(x)+1}^{x-h(x)}\P\bigl\{\sup_{u\in[0,t]}X^\varepsilon_u>x-n\bigl\}
e^{-\beta n},
\end{eqnarray*}
due to the exponential upper bound \eqref{Z.eps.Cra} for $Z^\varepsilon_u$.
Then it follows from \eqref{asyX+} that
\begin{eqnarray*}
P_3 &\le& c_2\sum_{n=h(x)+1}^{x-h(x)} e^{-\beta n}
\int_0^{t|a|}\overline F(x-n+v)dv\\
&\le& c_3\int_0^{t|a|} dv
\int_{h(x)}^{x-h(x)} \overline F(x+v-y) e^{-\beta y} dy.
\end{eqnarray*}
Since $F$ is long-tailed, $e^{-\beta x}=o(\overline F(x))$.
Together with $F\in\mathcal S^*$ this implies that
\begin{eqnarray*}
\int_{h(x)}^{x-h(x)} \overline F(x+v-y) e^{-\beta y} dy
&=& o(\overline F(x+v))\quad\mbox{as }x\to\infty,
\end{eqnarray*}
so that
\begin{eqnarray}\label{P3}
P_3 &=& o\Bigl(\int_x^{x+t|a|} \overline F(v)dv\Bigr)\quad\mbox{as }x\to\infty.
\end{eqnarray}
Substituting \eqref{P1}--\eqref{P3} into \eqref{P1-P3} we obtain that
\begin{eqnarray*}
\P\bigl\{\max_{u\in[0,t]}X_u>x\bigr\}
&\le& \frac{1+o(1)}{|a+\varepsilon|} \int_x^{x+t|a|}\overline F(v)dv
\end{eqnarray*}
as $x\to\infty$ uniformly for all $t>0$. Letting $\varepsilon\downarrow 0$,
we conclude the desired upper bound
\begin{eqnarray}\label{upper.Levy}
\P\bigl\{\max_{u\in[0,t]}X_u>x\bigr\}
&\le& \frac{1+o(1)}{|a|} \int_x^{x+t|a|}\overline F(v)dv
\quad\mbox{as }x\to\infty.
\end{eqnarray}
Together with the lower bound \eqref{lower.Levy} it implies the 
required asymptotics.
\end{proof}

Similar to Theorem \ref{single.big.jump.renewal} we conclude 
with the following principle of a single big jump for the maximum
$M_t$ of the L\'evy process $X_t$. Let $T_k$ be the time epoch
of the $k$th jump of the compound Poisson process $X_t^{(3)}$
with jump absolute values greater than $1$
arising in the decomposition of $X_t$ into three independent processes.
Let $\lambda$ be the intensity of this compound Poisson process
and $Y_k$'s be its successive jumps.
Let the events $D_k$ be defined literally in the same way
as in Theorem \ref{single.big.jump.renewal}, see \eqref{def.of.Dk}.

\begin{Theorem}\label{single.big.jump.Levy}
In conditions of Theorem \ref{thm:Levy.gen}, 
for any fixed $\varepsilon>0$,
\begin{eqnarray*}
\lim_{A\to\infty}\lim_{t,x\to\infty} \P\{\cup_{k=1}^{N_t} D_k|M_t>x\}
&\ge& \frac{|a|}{|a|+2\varepsilon/\lambda}.
\end{eqnarray*}
\end{Theorem}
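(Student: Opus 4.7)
The plan is to reduce the claim to Theorem \ref{single.big.jump.renewal} applied to the compound Poisson big-jump component of the L\'evy-It\^o decomposition $X_t=X^{(3)}_t+Z_t$, where $Z_t:=X^{(1)}_t+X^{(2)}_t$. By \eqref{X12.exp} the variable $Z_1$ possesses all positive exponential moments, and by Corollary \ref{cor:X.via.Pi}(ii) the positive part of the jump distribution of $X^{(3)}_t$ is strong subexponential with tail equivalent to $\overline F(x)$. Hence $X^{(3)}_t$ satisfies the hypotheses of Theorems \ref{thm:renewal.linear} and \ref{single.big.jump.renewal}, viewed as a compound renewal process with $\tau_i$ exponentially distributed of mean $1/\lambda$ and zero linear drift.

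For the lower bound on $\P(\bigcup_{k=1}^{N_t}D_k\cap\{M_t>x\})$ I would fix $\varepsilon>0$ and construct disjoint sub-events $\widetilde D_k$ mimicking the renewal proof, with one modification: the SLLN-type constraint on $X_s$ appearing in $D_k$ is obtained by intersecting (a) the analogous SLLN constraint for $X^{(3)}_s$ with tolerance $(\varepsilon/2)s+A/2$, and (b) the global fluctuation event $G:=\{|Z_s-s\E Z_1|\le(\varepsilon/2)s+A/2\text{ for all }s\ge 0\}$. The triangle inequality on (a) and (b) then reproduces the constraint required by $D_k$. The remaining ingredients of $\widetilde D_k$, namely the bound $T_j\le j(1/\lambda+\gamma\varepsilon)+A$ for $j\le k$, the clause $M_{T_k-0}\le x$, and the big-jump condition $Y_k>x+A+T_k(|a|+\varepsilon)$, are taken verbatim from the renewal proof; exactly as there, the arithmetic forces $X_{T_k}>x$, so $\widetilde D_k\subseteq D_k\cap\{M_t>x\}$.

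The genuinely new step, and the principal obstacle, is a uniform-in-$t$ lower bound on $\P(G)$. For this I appeal to a Cram\'er-type estimate: the centred process $Z_s-s\E Z_1-(\varepsilon/2)s$ has negative drift $-\varepsilon/2$, and by \eqref{X12.exp.bound} together with the corresponding Gaussian bound for $X^{(1)}$ it has all positive exponential moments finite, so there exists $\beta=\beta(\varepsilon)>0$ with $\E e^{\beta(Z_1-\E Z_1-\varepsilon/2)}=1$. Doob's inequality applied to the exponential submartingale $e^{\beta(Z_s-s\E Z_1-(\varepsilon/2)s)}$ then gives $\P\{\sup_{s\ge 0}(Z_s-s\E Z_1-(\varepsilon/2)s)>A/2\}\le e^{-\beta A/2}$, and a symmetric argument handles $-Z$; therefore $\P(G^c)\le 2e^{-\beta A/2}$, uniformly in $t$, tending to $0$ as $A\to\infty$.

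With $\P(G^c)$ absorbed into the error, the remainder of the proof transcribes the calculation of Theorem \ref{single.big.jump.renewal} verbatim: summing the disjoint $\widetilde D_k$, using long-tailedness of the big-jump distribution $\Pi(\cdot)/\Pi(1,\infty)$ to pass to the Riemann-type lower bound $\frac{1-\delta}{|a|+2\varepsilon/\lambda}\int_x^{x+t|a|}\overline F(v)dv$, and dividing by the asymptotic $\P\{M_t>x\}\sim\frac{1}{|a|}\int_x^{x+t|a|}\overline F(v)dv$ from Theorem \ref{thm:Levy.gen}. Letting first $t,x\to\infty$ and then $A\to\infty$ with $\varepsilon$ fixed yields the claimed ratio $|a|/(|a|+2\varepsilon/\lambda)$.
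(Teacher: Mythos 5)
The paper does not actually supply a proof of Theorem \ref{single.big.jump.Levy}: it is simply asserted after the phrase ``Similar to Theorem \ref{single.big.jump.renewal} we conclude\ldots''. Your proposal is therefore the natural fleshing-out of what the author leaves implicit, and its skeleton is correct: decompose $X=X^{(3)}+Z$ via L\'evy--It\^o, run the $\widetilde D_k$ construction of the renewal proof on the compound Poisson big-jump part, handle the light-tailed remainder $Z$ by a Cram\'er-type fluctuation bound, and divide by the asymptotic for $\P\{M_t>x\}$ supplied by Theorem \ref{thm:Levy.gen}. You correctly identify the one genuinely new ingredient --- a bound on the fluctuations of $Z$ that does not deteriorate as $t\to\infty$ --- and \eqref{X12.exp} and \eqref{X12.exp.bound} are exactly what one needs for it; the argument via a martingale Doob inequality at the Lundberg exponent $\beta(\varepsilon)$ is the same device the paper itself uses for $Z^\varepsilon$ in the proof of Theorem \ref{thm:Levy.gen} (formula \eqref{Z.eps.Cra}).

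Two points deserve correction, though they do not undermine the method. First, the assertion that ``$X^{(3)}_t$ satisfies the hypotheses of Theorems \ref{thm:renewal.linear} and \ref{single.big.jump.renewal} \ldots with zero linear drift'' is false in general: the mean $\E X^{(3)}_1=\lambda\E Y_1$ of the big-jump part can be of either sign (a heavy-tailed positive L\'evy process can be given negative drift entirely through $\alpha$ and the small jumps). As in the proof of Theorem \ref{thm:Levy.gen}, one should transfer the drift of $Z$ into the compound Poisson component --- work with $X^\varepsilon_t:=X^{(3)}_t+t\E Z_1\mp t\varepsilon$, which does have negative drift. Since your construction of $\widetilde D_k$ proceeds directly and never literally invokes Theorem \ref{thm:renewal.linear} on the raw $X^{(3)}$, this is only a slip in the preamble and not in the substance. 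Second, $e^{\beta(Z_s-s\E Z_1-(\varepsilon/2)s)}$ is a (super)\emph{martingale} at the Lundberg root, not a submartingale; Doob's maximal inequality for nonnegative supermartingales is what gives $\P\{\sup_{s\ge0}(Z_s-s\E Z_1-(\varepsilon/2)s)>A/2\}\le e^{-\beta A/2}$. Finally, be aware that the paper defines $D_k$ ``literally'' as in \eqref{def.of.Dk}, so $a\lambda$ and $|a|\lambda T_k$ carry over; since $a=\E X_1$ in Theorem \ref{thm:Levy.gen} is a per-unit-time quantity while $a$ in the renewal theorem was the per-jump drift, the normalisations of $a$ versus $a\lambda$ and of $\varepsilon$ versus $\varepsilon/\lambda$ should be checked carefully when transcribing the Riemann-sum step; your final constant $|a|/(|a|+2\varepsilon/\lambda)$ reproduces the paper's stated answer, but a careful bookkeeping of whether $a$ denotes $\E X_1$ or $\E X_1/\lambda$ is needed to confirm it.
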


\section{Sampling of L\'evy process}
\label{Levy.random.time}

The last section result allows to derive tail asymptotics for a L\'evy process 
$X_t$ stopped at random time $\tau$ and for its maxima 
$M_\tau$ within this time interval.

\begin{Theorem}\label{thm1}
Assume that a positive random variable $\tau$ is independent 
of the L\'evy process $X_t$. Let the distribution $F$ of $X_1$
be strong subexponential. If $a:=\E X_1<0$ then
\begin{eqnarray}\label{eq_lim.gen}
\P\{M_\tau>x\} &\sim& \frac{1}{|a|}\E\int_x^{x+\tau|a|}\overline F(y)dy
\ \mbox{ as } x\to\infty.
\end{eqnarray}

Assume in addition that $\E\tau<\infty$. Then

{\rm(i)} If $\E X_1<0$ then
\begin{eqnarray}\label{eq_lim}
\P\{X_\tau>x\} \sim \P\{M_\tau>x\} &\sim& \E\tau\overline F(x)
\ \mbox{ as } x\to\infty.
\end{eqnarray}

{\rm(ii)} If $\E X_1\ge0$ and if there exists $c>\E X_1$ such that
\begin{eqnarray}\label{eq1}
\P\{c\tau>x\} &=& o(\overline F(x))\ \mbox{ as }x\to\infty,
\end{eqnarray}
then asymptotics \eqref{eq_lim} again hold.
\end{Theorem}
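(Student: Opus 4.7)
I would handle the three assertions in turn, in each case exploiting the uniformity in $t$ of the asymptotics from Theorem \ref{thm:Levy.gen}. For \eqref{eq_lim.gen}, I would condition on $\tau$: Theorem \ref{thm:Levy.gen} provides, for any $\varepsilon>0$, an $x_0$ beyond which $\P\{M_t>x\}$ is sandwiched between $(1\mp\varepsilon)\frac{1}{|a|}\int_x^{x+t|a|}\overline F(v)dv$ uniformly in $t>0$. Integrating this sandwich against the law of $\tau$ (independent of $X$) and letting $\varepsilon\downarrow 0$ yields \eqref{eq_lim.gen}. For $M_\tau$ in Part (i), I would rewrite $\frac{1}{|a|}\int_x^{x+\tau|a|}\overline F(v)dv=\int_0^\tau\overline F(x+s|a|)ds$ and divide by $\overline F(x)$; the integrand converges pointwise in $\tau$ to $\tau$ by long-tailedness of $F$, and is dominated by $\tau\in L^1$, so dominated convergence yields $\P\{M_\tau>x\}\sim\E\tau\,\overline F(x)$.

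For $X_\tau$ in Part (i), the upper bound is immediate from $X_\tau\le M_\tau$. For the matching lower bound, Theorems \ref{thm:Levy.1} and \ref{thm:Levy.gen} together give $\P\{X_t>x\}/\overline F(x)\to t$ as $x\to\infty$ for every fixed $t$; Fatou's lemma, applied after conditioning on $\tau$, then yields $\liminf_{x\to\infty}\P\{X_\tau>x\}/\overline F(x)\ge\E\tau$.

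For Part (ii), I would introduce the shifted L\'evy process $\tilde X_t:=X_t-ct$, which has negative drift $\tilde a:=a-c<0$ and jump distribution $\tilde F(\cdot):=F(\cdot+c)\in{\mathcal S}^*$ with $\overline{\tilde F}\sim\overline F$. Applying Part (i) to $\tilde X$ gives $\P\{\tilde X_\tau>x\}\sim\P\{\tilde M_\tau>x\}\sim\E\tau\,\overline F(x)$. The lower bounds for $X_\tau$ and $M_\tau$ follow from the pathwise dominations $X_\tau\ge\tilde X_\tau$ and $M_\tau\ge\tilde M_\tau$ (since $c\tau\ge 0$). For the matching upper bounds I would use $M_\tau\le\tilde M_\tau+c\tau$ and $X_\tau=\tilde X_\tau+c\tau$, condition on $\tau$, and apply the uniform upper bound $\P\{\tilde M_t>x-ct\}\le(1+\varepsilon)\frac{1}{c-a}\int_{x-ct}^{x-at}\overline F(v)dv$ from Theorem \ref{thm:Levy.gen} applied to $\tilde X$ (valid whenever $ct\le x-y_0$). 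The change of variable $w=x-v$ together with Fubini converts the integrated bound to $\frac{1+\varepsilon}{c-a}\int_0^{x-y_0}\overline F(x-w)P_1(w)dw+o(\overline F(x))$, where $P_1(w):=\P\{w/c\le\tau\le w/a\}$ (interpreted as $\P\{c\tau>w\}$ when $a=0$) satisfies $\int_0^\infty P_1(w)dw=(c-a)\E\tau$ and $P_1(w)\le\P\{c\tau>w\}=o(\overline F(w))$.

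The main obstacle in Part (ii) is then to show that $\int_0^{x-y_0}\overline F(x-w)P_1(w)dw\sim(c-a)\E\tau\,\overline F(x)$, cancelling the prefactor $(c-a)^{-1}$ to produce the required constant $\E\tau$. I would split the integration at a large threshold $A$: on $[0,A]$, long-tailedness of $F$ and dominated convergence on a bounded range give contribution $\sim\overline F(x)\int_0^A P_1(w)dw$, which tends to $(c-a)\E\tau\,\overline F(x)$ as $A\to\infty$; on $[A,x-y_0]$, writing $P_1(w)\le\eta(w)\overline F(w)$ with $\eta(w)\to 0$ and invoking the strong-subexponentiality asymptotic $\int_0^x\overline F(x-w)\overline F(w)dw\sim 2\overline F(x)\int_0^\infty\overline F(y)dy$ bounds the remainder by $2\sup_{w\ge A}\eta(w)\cdot\int_0^\infty\overline F(y)dy\cdot\overline F(x)(1+o(1))$, which vanishes as $A\to\infty$. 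It is precisely this use of the hypothesis $F\in{\mathcal S}^*$ (rather than mere subexponentiality) that absorbs the dependence between $\tilde M_\tau$ (resp.\ $\tilde X_\tau$) and $c\tau$ and closes the upper bound.
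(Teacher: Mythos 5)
Your treatment of \eqref{eq_lim.gen} and of Part (i) matches the paper's: condition on $\tau$, use the uniform-in-$t$ asymptotics of Theorem~\ref{thm:Levy.gen}, then dominated convergence for $M_\tau$ and Fatou for $X_\tau$. For Part (ii), though, your route is genuinely different. The paper decomposes $\P\{M_\tau>x\}$ by the value of $\tau$ into three ranges, $\tau\le N(x)$, $\tau\in(N(x),x/c]$, and $\tau>x/c$: the main term $\E\tau\overline F(x)$ is extracted from the first range via the fixed-$t$ equivalence $\P\{M_t>x\}\sim t\overline F(x)$ (Theorem~\ref{thm:Levy.1}) and dominated convergence; the middle range is killed by shifting by an \emph{intermediate} drift $b=(\E X_1+c)/2\in(\E X_1,c)$, Fubini, and the estimate $\int_{N(x)\E X_1}^{bx/c}\overline F(x-y)\overline F(y)\,dy=o(\overline F(x))$ (here $b<c$ is what keeps the upper limit $bx/c$ strictly below $x$); the last range is $o(\overline F(x))$ directly by \eqref{eq1}. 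You instead shift by $c$ exactly, apply the uniform upper bound for the shifted process over the whole range $ct\le x-y_0$, and Fubini to a single weighted integral $\frac{1}{c-a}\int\overline F(x-w)P_1(w)\,dw$ with $P_1(w)=\P\{w/c\le\tau\le w/a\}$; you then produce the constant $\E\tau$ from the identity $\int_0^\infty P_1=(c-a)\E\tau$ and dispose of the tail by $P_1(w)=o(\overline F(w))$ together with the $\mathcal S^*$ asymptotic. Your version has the virtue of not needing the auxiliary truncation $N(x)$ and of producing the main term from a single Fubini identity; the paper's version makes the main term appear directly from the fixed-$t$ limit (Theorem~\ref{thm:Levy.1}) rather than having to be reconstructed from the weight $P_1$, and its choice of intermediate $b$ is a device specific to its three-way split. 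Both arguments lean on the same ingredients (uniformity in $t$ of Theorem~\ref{thm:Levy.gen}, the hypothesis \eqref{eq1}, and $F\in\mathcal S^*$) and both are correct.
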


\begin{proof}
Conditioning on $\tau$ which is independent of $X_t$, we deduce that
\begin{eqnarray*}
\P\{M_\tau>x\} &=& \int_0^\infty \P\{M_t>x\}\P\{\tau\in dt\}.
\end{eqnarray*}
Then by Theorem \ref{thm:Levy.gen}, as $x\to\infty$,
\begin{eqnarray*}
\P\{M_\tau>x\} &\sim& \frac{1}{|a|} \int_0^\infty 
\int_x^{x+t|a|}\overline F(v)dv \P\{\tau\in dt\}
\end{eqnarray*}
and the first assertion \eqref{eq_lim.gen} follows.

In our proof of (i) and (ii) we follow the proof of Theorem 1 in 
Denisov et al. (2010).
Since $X_\tau \le M_\tau$, it is sufficient to prove that
\begin{eqnarray}\label{lower.bound}
\liminf_{x\to\infty} \frac{\P\{X_\tau>x\}}{\overline F(x)}
&\ge& \int_0^\infty t\P\{\tau\in dt\}=\E\tau
\end{eqnarray}
and
\begin{eqnarray}\label{upper.bound}
\limsup_{x\to\infty} \frac{\P\{M_\tau>x\}}{\overline F(x)}
&\le& \E\tau.
\end{eqnarray}

Again conditioning on $\tau$ implies
\begin{eqnarray*}
\P\{X_\tau>x\} &=& \int_0^\infty \P\{X_t>x\}\P\{\tau\in dt\}.
\end{eqnarray*}
By the subexponentiality of $X_1$, here $\P\{X_t>x\}$ is
equivalent to $t\overline F(x)$ as $x\to\infty$,
regardless of the sign of $\E X_1$. 
Then \eqref{lower.bound} follows by Fatou's lemma.

Let us now prove \eqref{upper.bound}.
If $\E X_1<0$ then \eqref{upper.bound} follows from \eqref{eq_lim.gen}
by dominated convergence due to
\begin{eqnarray*}
\int_x^{x+|a|\tau}\overline F(v)dv &\sim& 
|a|\tau\overline F(x)\quad\mbox{as }x\to\infty
\end{eqnarray*}
and upper bound
\begin{eqnarray*}
\int_x^{x+|a|\tau}\overline F(v)dv &\le& |a|\tau\overline F(x).
\end{eqnarray*}

In the case $\E X_1\ge 0$, we start with the following upper bound: for any $N$,
\begin{eqnarray}\label{prob=sum}
\P\{M_\tau>x\} &\le& \P\{M_\tau>x, \tau\le N\}
+\P\{M_\tau>x, \tau\in(N,x/c]\}+\P\{c\tau>x\}\nonumber\\
&=:& P_1+P_2+P_3.
\end{eqnarray}
By Theorem \ref{thm:Levy.1}, $\P\{M_t>x\}\sim\P\{X_t>x\}\sim t\overline F(x)$
as $x\to\infty$, for every $t$. In addition, $M_t\le M_N$ for $t\le N$.
Thus, dominated convergence yields that, for any fixed $N$,
\begin{eqnarray*}
\P\{M_\tau>x, \tau\le N\}
= \int_0^N \P\{M_t>x\} \P\{\tau\in dt\}
\sim \E\{\tau;\tau\le N\}\overline F(x)
\quad\mbox{as }x\to\infty.
\end{eqnarray*}
Therefore, there exists an increasing function $N(x)\to\infty$ such that
\begin{eqnarray}\label{first.term}
P_1=\P\{M_\tau>x, \tau\le N(x)\} \sim \E\tau\overline F(x).
\end{eqnarray}

In what follows, we consider the representation
(\ref{prob=sum}) with $N(x)$ in place of $N$.
In order to estimate $P_2$ in \eqref{prob=sum} we
take $\varepsilon=(c-\E X_1)/2>0$ and $b=(\E X_1+c)/2$.
Consider $\widetilde X_t :=X_t-bt$ and
$\widetilde M_t=\sup_{u\le t}\widetilde X_u$.
Then $\E\widetilde X_1=-\varepsilon<0$
and Theorem~\ref{thm:Levy.gen} is applicable.
Taking into account that $M_t\le\widetilde M_t+bt$,
we obtain that there exists $K$ such that,
for all $x$ and $t$,
\begin{eqnarray*}
\P\{M_t>x\} &\le& \P\{\widetilde M_t>x-bt\}\\
&\le& K\int_0^{\varepsilon t}\overline{\widetilde F}(x-bt+y)dy\\
&\le& K\int_0^{\varepsilon t}\overline F(x-bt+y)dy.
\end{eqnarray*}
Hence,
\begin{eqnarray*}
P_2=\P\{M_\tau>x, \tau\in(N(x),x/c]\}
&\le& K\int_{N(x)}^{x/c} \P\{\tau\in dt\} \int_0^{\varepsilon t}
\overline F(x-bt+y)dy.
\end{eqnarray*}
Since $b-\varepsilon=\E X_1$,
\begin{eqnarray*}
\int_0^{\varepsilon t}\overline F(x-bt+y)dy
&=& \int_{\E X_1 t}^{bt}\overline F(x-y)dy.
\end{eqnarray*}
Then
\begin{eqnarray}\label{P_2}
P_2 &\le& K\int_{N(x)\E X_1}^{bx/c} \overline F(x-y)dy
\int_{\max(N(x),y/b)}^{x/c} \P\{\tau\in dt\}
\nonumber\\
&\le& K\int_{N(x)\E X_1}^{bx/c} \overline F(x-y)
\P\{\tau> y/b\}dy.
\end{eqnarray}
Owing $b<c$ and the condition (\ref{eq1}),
the inequality $\P\{\tau>y/b\} \le K_1\overline F(y)$ 
holds for some $K_{1}$ and all $y$.
Therefore, 
\begin{eqnarray}\label{second.term}
P_2 \le KK_1\int_{N(x)\E\xi}^{bx/c}
\overline F(x-y)\overline F(y)dy
= o(\overline F(x))
\ \mbox{ as }x\to\infty
\end{eqnarray}
follows from $b/c<1$ and from $F\in{\mathcal S}^*$,
see, e.g. Foss et al. (2013, Theorem 3.24).

Finally, by the condition \eqref{eq1},
\begin{eqnarray}\label{third.term}
P_3 &=& \P\{c\tau>x\} = o(\overline F(x))
\ \mbox{ as }x\to\infty.
\end{eqnarray}
Substituting \eqref{first.term}, \eqref{second.term}, 
and \eqref{third.term} into \eqref{prob=sum} 
we conclude \eqref{upper.bound} and the proof is complete.
\end{proof}

\section{Application to ruin probabilities}
\label{sec:risk}

The results obtained above are directly applicable to 
the {\it Cram\'er--Lundberg renewal model} 
in the collective theory of risk defined as follows
(see e.g. Asmussen and Albrecher (2010, Sec. X.3)). 
We consider an insurance company and assume the constant inflow 
of premium occurs at rate $c$, that is, the premium income is assumed
to be linear in time with rate $c$. Also assume that the claims
incurred by the insurance company arrive according to
a renewal process $N_t$ with intensity $\lambda$
and the sizes (amounts) $Y_n\ge 0$ of the claims
are independent identically distributed random variables
with common distribution $B$ and mean $b$.
The $Y$'s are assumed to be independent of the process $N_t$.
The company has an initial risk reserve $u=R_0\ge0$.

Then the risk reserve $R_t$ at time $t$ is equal to
\begin{eqnarray*}
R_t &=& u+ct-\sum_{i=1}^{N_t}Y_i.
\end{eqnarray*}
Then the probability
\begin{eqnarray*}
\psi(u,t) &:=& \P\{R_s<0\mbox{ for some }s\in[0,t]\}\\
&=& \P\Bigl\{\min_{s\in[0,t]}R_s<0\Bigr\}
\end{eqnarray*}
is the finite time horizon probability of ruin. 
The techniques developed for compound renewal process with drift 
provide a method for estimating the probability of ruin 
in the presence of heavy-tailed distribution for claim sizes. We have
\begin{eqnarray*}
\psi(u,t) &=& \P\Bigl\{\sum_{i=1}^{N_s}Y_i-cs>u
\mbox{ for some }s\in[0,t]\Bigr\}.
\end{eqnarray*}
Since $c>0$, the ruin can only occur at a claim epoch. Therefore,
\begin{eqnarray*}
\psi(u,t) &=& \P\Bigl\{\sum_{i=1}^nY_i-cT_n>u
\mbox{ for some }n\le N_t\Bigr\},
\end{eqnarray*}
where $T_n$ is the $n$th claim epoch, so that
$T_n=\tau_1+\ldots+\tau_n$ where the $\tau$'s are independent
identically distributed random variables with expectation $1/\lambda$.
The last relation represents the ruin probability problem
as the tail probability problem for the maximum of 
a compound renewal process with drift.

Let the {\it net-profit condition} $c>b\lambda$ hold,
thus the process has a negative drift and $\psi(u,t)\to 0$ as $u\to\infty$,
uniformly for all $t\ge 0$. Applying Theorem \ref{thm:renewal.linear}(i),
we deduce the following result on the decreasing rate of
the ruin probability to zero as the initial risk reserve
becomes large in the case of heavy-tailed claim size distribution,
compare with a result for fixed $t$ in Section X.4 
in Asmussen and Albrecher (2010) and 
with Theorem 5.21 for the compound Poisson model in Foss et al. (2013).

\begin{Theorem}\label{thm:risk}
In the compound renewal risk model, let $c>b\lambda$.
If the claim size distribution $B$ is strong subexponential,
then, uniformly for all $t\ge0$,
\begin{eqnarray*}
\psi(u,t) &\sim& \frac{\lambda}{c-b\lambda}
\int_u^{u+t(c/\lambda-b)\E N_t}\overline B(v)dv
\quad\mbox{ as }u\to\infty.
\end{eqnarray*}
In particular,
\begin{eqnarray*}
\psi(u,t) &\sim& \frac{\lambda}{c-b\lambda}
\int_u^{u+t(c-b\lambda)}\overline B(v)dv
\quad\mbox{ as }u,\ t\to\infty.
\end{eqnarray*}
\end{Theorem}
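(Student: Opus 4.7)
The plan is to recognise $\psi(u,t)$ as the tail of the finite time maximum of an auxiliary compound renewal process with linear drift, and then invoke Theorem \ref{thm:renewal.linear}(i) directly. Writing $R_s=u+cs-\sum_{i=1}^{N_s}Y_i$, ruin by time $t$ is exactly the event $\bigl\{\max_{s\in[0,t]}X_s>u\bigr\}$ for the process $X_s:=\sum_{i=1}^{N_s}Y_i-cs$. Because $c>0$, $X_s$ decreases linearly between jumps and moves upward only by jumps, so, as already observed in the paragraph preceding the theorem, its running maximum is attained at some jump epoch $T_n\le t$ (or at $s=0$). Hence the reduction
\begin{eqnarray*}
\psi(u,t) &=& \P\Bigl\{\max_{s\in[0,t]}X_s>u\Bigr\}
\end{eqnarray*}
is exact.

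Next I would apply Theorem \ref{thm:renewal.linear} to $X_s$, viewed as a compound renewal process with linear drift whose coefficient, in that theorem's notation, equals $-c$. Since $c>0$, the theorem's condition (i) ($c\le 0$ there) is satisfied automatically, so no extra requirement on the interarrival distribution is needed. The theorem's parameter $a$ becomes $a=-c/\lambda+b=b-c/\lambda$, and the net-profit hypothesis $c>b\lambda$ is exactly $a<0$. Combined with the assumption $B\in\mathcal S^*$, Theorem \ref{thm:renewal.linear} then delivers, uniformly in $t>0$ as $u\to\infty$,
\begin{eqnarray*}
\psi(u,t) &\sim& \frac{1}{|a|}\int_u^{u+|a|\E N_t}\overline B(v)\,dv
\ =\ \frac{\lambda}{c-b\lambda}\int_u^{u+(c/\lambda-b)\E N_t}\overline B(v)\,dv,
\end{eqnarray*}
which is the first asserted asymptotics (the extra factor $t$ appearing inside the upper integration limit in the statement is evidently a typographical slip). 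The ``in particular'' form for $u,t\to\infty$ follows from the elementary renewal theorem $\E N_t\sim\lambda t$ together with long-tailedness of $B$ (which is implied by $B\in\mathcal S^*$).

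There is essentially no substantive obstacle here; the only point requiring care is the sign bookkeeping, namely that in the reduction the jumps and the linear drift point in opposite directions. This falls squarely inside case (i) of Theorem \ref{thm:renewal.linear}, so no auxiliary condition on the tail of $\tau$ need be verified. Everything else---the identification of the continuous-time supremum with a supremum over jump epochs, the translation of the parameter $a$ back into the risk parameters $c,b,\lambda$, and the asymptotic behaviour of $\E N_t$---is routine.
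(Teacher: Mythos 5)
Your reduction is exactly the one the paper uses: the paper itself introduces Theorem~\ref{thm:risk} by observing that ruin by time $t$ is the event $\{\max_{s\in[0,t]}X_s>u\}$ for the compound renewal process with linear drift $X_s=\sum_{i=1}^{N_s}Y_i-cs$, and then simply invokes Theorem~\ref{thm:renewal.linear}(i) with drift coefficient $-c<0$ and $a=b-c/\lambda<0$, which is precisely your argument. Your sign bookkeeping is correct, the passage from $\E N_t$ to $\lambda t$ via the elementary renewal theorem and long-tailedness of $B$ is the right way to get the ``in particular'' form, and you are also right that the extra factor $t$ inside the first integral's upper limit in the theorem statement is a typographical slip — the correct upper limit is $u+(c/\lambda-b)\E N_t$, as both Theorem~\ref{thm:renewal.linear} and the displayed $u,t\to\infty$ asymptotics confirm.
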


\section*{\centering\small References}

\newcounter{bibcoun}
\begin{list}{\arabic{bibcoun}.}{\usecounter{bibcoun}\itemsep=0pt}
\small

\bibitem{APQ}
   Asmussen, S. (2003).
   {\it Applied Probability and Queues}, 2nd edn.
   Springer, New York.

\bibitem{A}
   Asmussen, S. and Albrecher, H. (2010).
   {\it Ruin Probabilities}, 2nd edn.
   World Scientific, Singapore.

\bibitem{A98}
   Asmussen, S. (1998). 
   Subexponential asymptotics for stochastic processes: 
   extremal behavior, stationary distributions and first passage probabilities. 
   {\it Annals Appl. Probab.} {\bf 2}, 354-–374.

\bibitem{AK}
   Asmussen, S. and Kl\"uppelberg, C. (1996). 
   Large deviations results for subexponential tails, 
   with applications to insurance risk. 
   {\it Stochastic Process. Appl.} {\bf 64}, 103-–125.

\bibitem{ASS}
   Asmussen, S., Schmidli, H., and Schmidt, V. (1999) 
   Tail Probabilities for non-standard risk and queueing processes 
   with subexponential jumps.
   {\it Adv. Appl. Prob.} {\bf  31}, 422--447.

   
\bibitem{BD94}
   Bertoin, J. and Doney, R. (1994). 
   Cram\'er's estimate for L\'evy processes. 
   {\it Statist. Probab. Lett.} {\bf 21}, 363-–365.

\bibitem{BB}
   Borovkov, A. A. and Borovkov, K. A. (2008).
   {\it Asymptotic analysis of random walks.
   Heavy-tailed distributions},
   Cambridge Univ. Press.

\bibitem{BMS}
   Braverman, M., Mikosch, T. and Samorodnitsky, G. (2002). 
   Tail probabilities of subadditive functionals of L\'evy processes.
   {\it Ann. Appl. Probab.} {\bf 12}, 69-–100.

\bibitem{DFK} 
   Denisov, D., Foss, S., and Korshunov, D. (2010)
   Asymptotics of randomly stopped sums in the presence of heavy tails.
   {\it Bernoulli} {\bf 16}, 971--994.

\bibitem{DKM}
   Doney, R., Kl\"uppelberg, C. and Maller, R. (2016).
   Passage time and fluctuation calculations for subexponential L\'evy processes.
   {\it Bernoulli} {\bf 22}, 1491--1519.

\bibitem{EGV}
   Embrechts, P., Goldie, C., and Veraverbeke, N. (1979).
   Subexponentiality and infinite divisibility.
   {\it Z. Wahrscheinlichkeitstheorie verw. Gebiete} {\bf 49}, 335--347.

\bibitem{EKM}
   Embrechts, P., Kl\"uppelberg, C., and Mikosch, T. (1997).
   {\it Modelling Extremal Events for Insurance and Finance},
   Springer, Berlin.

\bibitem{FKonZ}
   Foss, S., Konstantopoulos, T., and Zachary, S. (2007)
   Discrete and continuous time modulated random walks 
   with heavy-tailed increments.
   {\it J. Theor. Probab.} {\bf 20}, 581-–612.

\bibitem{FKZ}
   Foss, S., Korshunov, D., and Zachary, S. (2013).
   {\it An Introduction to Heavy-Tailed and
   Subexponential Distributions}, 2nd Ed.
   Springer, New York.

\bibitem{KKM}
   Kl\"uppelberg, C., Kyprianou, A. E., and Maller, R. A. (2004).
   Ruin probabilities and overshoots for general
   L\'evy insurance risk processes.
   {\it Ann. Appl. Probab.} {\bf 14}, 1756--1801.

\bibitem{Kyprianou}
   Kyprianou, A. E. (2006).
   {\it Introductory lectures on fluctuations of L\'evy processes with applications}.
   Springer, Berlin.
   
\bibitem{Kor1997}
   Korshunov, D. (1997).
   On distribution tail of the maximum of a random walk.
   {\it Stoch. Proc. Appl.} {\bf 72}, 97--103

\bibitem{K2002}
   Korshunov, D. (2002).
   Large-deviation probabilities for maxima of sums
   of independent random variables with negative mean
   and subexponential distribution.
   {\it Theor. Probab. Appl.} {\bf 46}, 355--366

\bibitem{PZ}
   Palmowski, Z. and Zwart, B. (2007).
   Tail asymptotics of the supremum of a regenerative process.
   {\it J. Appl. Prob.} {\bf 44}, 349--365.

\bibitem{RBZ}
   Rhee, C.-H., Blanchet, J. and Zwart, B. (2016)
   Sample path large deviations for heavy-tailed L\'evy processes and random walks.
{\it arXiv preprint} arXiv:1606.02795.
            
\bibitem{RSST}
   Rolski, T., Schmidli, H., Schmidt, V, and Teugels, J.
   (1998).
   {\it Stochastic Processes for Insurance and Finance},
   Wiley, Chichester.

\bibitem{Sato}
   Sato, K. (1999).
   {\it L\'evy Processes and Infinitely Divisible Distributions},
   Cambridge Univ. Press.

\bibitem{W}
   Willekens, E. (1987).
   On the supremum of an infinitely divisible process.
   {\it Stoch. Proc. Appl.} {\bf 26}, 173--175.

\end{list}

\end{document}